\documentclass[reqno,12pt,twoside]{amsart}
\usepackage{amsmath,amssymb,amsfonts,amsthm}
\usepackage{enumerate}

\setlength{\topmargin}{-2cm}
\setlength{\headheight}{1.5cm}
\setlength{\headsep}{0.3cm}
\setlength{\textheight}{24cm}
\setlength{\oddsidemargin}{0cm}
\setlength{\evensidemargin}{0cm}
\setlength{\textwidth}{16cm}

\usepackage[backref]{hyperref}
\usepackage{microtype}
\newtheorem{theorem}{Theorem}[section]
\theoremstyle{plain}
\newtheorem{corollary}[theorem]{Corollary}
\newtheorem{definition}{Definition}[section]
\newtheorem{example}[theorem]{Example}
\newtheorem{lemma}[theorem]{Lemma}
\newtheorem{proposition}[theorem]{Proposition}
\numberwithin{equation}{section}
\newtheorem{remark}[theorem]{Remark}

\numberwithin{equation}{section}
\newcommand{\RR}{\mathbb{R}}
\newcommand{\CC}{\mathbb{C}}
\newcommand{\NN}{\mathbb{N}}

\newcommand{\bba}{\bar{\beta}}

\newcommand{\gba}{\bar{g}}

\newcommand{\wba}{\bar{w}}

\newcommand{\zba}{\bar{z}}
\newcommand{\Zba}{\overline{Z}}

\newcommand{\lc}{\mathcal{X}}
\newcommand{\tfour}{{D^{\mathrm{IV}}_4}}

\newcommand{\tfourM}{{D^{\mathrm{IV}}_{m}}}

\DeclareMathOperator{\Aut}{Aut}

\newcommand{\kba}{\bar{k}}
\newcommand{\lba}{\bar{\lambda}}

\newcommand{\rk}{\operatorname{rk}}
\renewcommand{\Re}{\operatorname{Re}}
\renewcommand{\Im}{\operatorname{Im}}

\begin{document}
\title[Maps from the sphere into the tube over the future light cone]{On CR maps from the sphere into the tube over the future light cone II: Higher dimensions}
\author{Michael Reiter}
\address{Fakultät für Mathematik, Universität Wien, Oskar-Morgenstern-Platz 1, 1090 Wien, Austria}
\email{m.reiter@univie.ac.at}
\author{Duong Ngoc Son}
\address{Faculty of Fundamental Sciences, PHENIKAA University, Hanoi 12116, Vietnam}
\email{son.duongngoc@phenikaa-uni.edu.vn}
\date{18 June 2024}
\begin{abstract}
We determine all CR maps from the sphere in \(\mathbb{C}^3\) into the tube over the future light cone in \(\mathbb{C}^4\). This result leads to a complete characterization of proper holomorphic maps from the three-dimensional unit ball into the classical domain of type IV of four dimension and confirms a conjecture of Reiter--Son in \cite{rs22} from 2022. Additionally, we prove a boundary characterization of isometric holomorphic embeddings from a ball into a classical domain of type IV in arbitrary dimensions that is similar to the main result in Huang--Lu--Tang--Xiao \cite{huang2020boundary}. The result is then used to treat a special case in the general characterization.
\end{abstract}
\maketitle

\section{Introduction}

Let \(\mathbb{H}^5 \subset \mathbb{C}^3\) be the Heisenberg hypersurface defined by
\[ 
\Im(w) - z \overline{z^t} = 0, \quad z = (z_1,z_2),
\]
and let $\mathcal{X}$ be a local model for the tube over the future light cone in \(\mathbb{C}^4\) given by
\[ 
\Im(w) - \frac{z \overline{z^t} + \Re(\bar{\zeta}z z^t)}{1 - |\zeta|^2}, \quad |\zeta|<1, \quad (z,\zeta,w) = (z_1,z_2,\zeta, w) \in \mathbb{C}^4.
\]
We are interested in the characterization of CR maps from \(\mathbb{H}^5\) into \(\mathcal{X}\). This problem is motivated by research on the characterization of CR maps between real hypersurfaces as well as proper holomorphic maps between domains in complex spaces. Interesting models for the CR map problem are those with ``large'' groups of CR automorphisms and their symmetry algebras (the Lie algebras of local CR automorphisms) such as spheres, tubes over the future light cone, smooth and Shilov boundaries of classical symmetric domains, and many others. For the proper holomorphic map characterization, we are interested in holomorphic maps between balls and classical symmetric domains of various types and dimensions. The related literature is huge and goes back to Henry Poincar\'e \cite{poincare1907fonctions} in 1907.  For later works, we mention for examples previous works related to CR maps of Webster \cite{Webster79b}, Faran \cite{faran1982maps,faran1986linearity}, Forstneric \cite{forstneric1989extending}, D'Angelo \cite{dangelo1988proper}, Della Sala et al \cite{della2020sufficient}, Ebenfelt \cite{ebenfelt04}, Huang \cite{Huang99}, Kim--Zaitsev \cite{kim}, Lamel \cite{lamel2019cr}, Reiter \cite{Reiter16a} and the numerous references therein. For works on proper holomorphic maps, we refer the readers to, e.g., Alexander \cite{alexander1977proper}, Chan--Mok \cite{chan2017holomorphic}, Mok \cite{Mok2011,Mok2018}, Mok--Ng \cite{mok2012germs}, Xiao--Yuan \cite{xiao2020holomorphic} and the references therein for more information. Finally, we should also mention Reiter--Son \cite{rs22} which treats the lower dimensional case, namely, the case of the sphere in \(\mathbb{C}^2\) and the tube over the future light cone in \(\mathbb{C}^3\). That paper is directly related to the present manuscript. 

Let us denote by \(\Aut(\mathbb{H}^5)\) and \(\Aut(\mathcal{X})\) the CR automorphism groups of the Heisenberg hypersurface and the local model of the tube over the future light cone, respectively. Two germs of CR maps \((H,p)\) and \((\tilde{H}, q)\) from \(\mathbb{H}^5\) into \(\mathcal{X}\) are said to be equivalent if there exist \(\psi \in \Aut(\mathbb{H}^5)\) and \(\gamma \in \Aut(\mathcal{X})\) such that \(\psi(p) = q\), \(\gamma(H(p)) = \tilde{H}(q)\) and 
\begin{equation}\label{e:germeq}
	\tilde{H} = \gamma \circ H \circ \psi^{-1}.
\end{equation}
Then, our main result can be stated as follows.
\begin{theorem}\label{main} Let \(U\) be an open subset of \(\mathbb{H}^5\) and \(H\colon U \to \mathcal{X} \subset \mathbb{C}^4\) a \(C^2\)-smooth CR map. Then the following hold:
\begin{enumerate}[(a)]
	\setlength{\itemsep}{3pt}
	\item If \(H\) is CR transversal at a point, then it is transversal on \(U\). The germs \((H,q)\), \(q\in U\), are mutually equivalent and are equivalent to exactly one of the germs at the origin of the following maps:
	
	\medskip 
	
	\begin{enumerate}[(i)]
		\setlength{\itemsep}{3pt}
		\item \(\ell(z,w) = (z,0,w)\),
		\item \(r(z,w) = \left(\dfrac{z(I+iwA)}{1 - w^2},\ \dfrac{ 2zAz^t}{1 - w^2},\ \dfrac{w}{1 - w^2} \right), \ \text{with}\ A = \begin{pmatrix} 1 & 0 \\ 0 & -1
		\end{pmatrix},\) and $I$ 
		is the $2\times 2$-identity matrix,
		\item \(\iota(z,w) = \dfrac{2}{1 + \sqrt{1-4w^2 - 4iz z^t}}\left(z,\, w, \, w \right)\).
	\end{enumerate}  
	\item \(H\) is nowhere CR transversal and equivalent to a map $(z,w) \mapsto (0,\phi(z,w),0)$ for a $C^2$-smooth CR function $\phi$ with $\phi(0)=0$.
\end{enumerate}
\end{theorem}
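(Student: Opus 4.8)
\emph{Setup and the transversality dichotomy.} After composing with elements of $\Aut(\mathbb{H}^5)$ and $\Aut(\mathcal{X})$, assume the base point is $0\in\mathbb{H}^5$ and $H(0)=0$; write $H=(F,G,H_0)$ with $F=(F_1,F_2)$ the $z$-components, $G$ the $\zeta$-component and $H_0$ the $w$-component, and set $\rho_{\mathcal{X}}=\Im w-\Psi$ with $\Psi=\bigl(z\overline{z^t}+\Re(\bar\zeta\,zz^t)\bigr)/(1-|\zeta|^2)$, so $\mathcal{X}=\{\rho_{\mathcal{X}}=0\}$ bounds the Siegel-type realization $\tfour=\{\rho_{\mathcal{X}}>0\}$. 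Since $\mathbb{H}^5$ is strictly pseudoconvex, each $C^2$ CR component of $H$ is the boundary value of a function holomorphic on the $B^3$-side, so $H$ extends to $\tilde H$ holomorphic near $0$ from inside. Because $\tfour$ is pseudoconvex, $\Psi$ is plurisubharmonic, hence $\Psi\circ\tilde H-\Im\tilde H_0=-(\rho_{\mathcal{X}}\circ\tilde H)$ is subharmonic and vanishes on $\mathbb{H}^5$; the strong maximum principle then gives the alternative: either $\rho_{\mathcal{X}}\circ\tilde H>0$ on the inside, so $\tilde H$ maps into $\tfour$, or $\rho_{\mathcal{X}}\circ\tilde H\equiv0$, so $\tilde H$ maps into $\mathcal{X}$ itself. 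In the first case the Hopf lemma forces the normal derivative of $\rho_{\mathcal{X}}\circ H$ to be nonvanishing at every boundary point, i.e.\ $H$ is CR transversal on all of $U$; moreover $\tilde H$ is holomorphic over a connected set and $\Aut(\mathbb{H}^5)$ is transitive, so all germs $(H,q)$ are mutually equivalent --- the first assertions of (a). In the second case $\tilde H(B^3)\subset\mathcal{X}$; because the light cone carries a Lorentzian form, a signature argument shows $\mathcal{X}$ contains no complex submanifold of dimension $>1$, so $\tilde H$ has rank $\le 1$ and its image is a complex curve in $\mathcal{X}$. Every such curve is tangent to the one-dimensional Levi kernel and, by homogeneity of $\mathcal{X}$ together with transitivity of $\Aut(\mathcal{X})$ on null complex directions, is carried onto the disc $\{z=0,\ w=0\}\subset\mathcal{X}$; in those coordinates $\tilde H=(0,0,\phi,0)$ with $\phi$ a CR function and $\phi(0)=0$, which is exactly case (b).

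\emph{Normalization in the transversal case.} Now $\tilde H$ maps $B^3$ into $\tfour$ and $\Im H_0=\Psi(F,G,\bar F,\bar G)$ holds on $\{\Im w=z\overline{z^t}\}$. Transversality makes the coefficient of $w$ in $H_0$ a positive real number, normalized to $1$ by a dilation; matching the lowest-weight part of this identity yields $M\overline{M^t}=I$ for the linear part $F=zM+\cdots$, and absorbing the unitary $M$ into $\Aut(\mathbb{H}^5)$ gives $F=z+\cdots$ and $G=\cdots$ with no linear term. Expanding the identity in weighted-homogeneous pieces produces a descending hierarchy of equations for the remaining jets of $F$, $G$, $H_0$; since $\mathbb{H}^5$ and $\mathcal{X}$ are real-algebraic and $\mathbb{H}^5$ is minimal and holomorphically nondegenerate, $\tilde H$ is algebraic (in the spirit of Forstneric), so this hierarchy reduces to a finite system. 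The quadratic form $zz^t=z_1^2+z_2^2$ cutting out the cone is what couples $G$ to $F$, and its eigenstructure is the origin of the matrix $A=\operatorname{diag}(1,-1)$ in model $r$.

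\emph{Solving the system.} I would split according to whether the Levi-kernel component $G$ vanishes identically. If $G\equiv 0$, then $H$ is a CR transversal self-map of the strictly pseudoconvex $\mathbb{H}^5$, hence an automorphism, and one gets the class of $\ell$. If $G\not\equiv 0$, the hierarchy separates into a rational branch --- which after reassembling the normalized power series is the class of $r$ --- and a branch on which the normalized map turns out to be an isometric holomorphic embedding $B^3\hookrightarrow\tfour$; on this branch I would apply the boundary characterization of isometric embeddings $B^n\hookrightarrow\tfourM$ established in the paper (the ``special case'' referred to in the introduction) to conclude that the map is equivalent to $\iota$. Finally, the three germs are pairwise inequivalent: $\iota$ is genuinely non-rational while $\ell$ and $r$ are rational of bounded degree (the automorphisms of $\mathbb{H}^5$ and of $\mathcal{X}$ being rational of bounded degree), and $\ell$ has image in the hyperplane section $\{\zeta=0\}\cap\mathcal{X}$ while $r$ does not --- a distinction already visible at the weight-three jet. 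This completes (a).

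\emph{The main obstacle.} I expect the hard part to be the last step: showing the jet hierarchy has exactly these three solution families. As $\mathcal{X}$ is only $2$-nondegenerate rather than Levi-nondegenerate, the Chern--Moser-type bookkeeping does not terminate as cleanly as for sphere-to-sphere maps, so one has to exploit the rigid algebraic structure of the cone --- the form $zz^t$, the denominator $1-|\zeta|^2$, and the classification of linear automorphisms of the light cone --- to close the system, and the isometric-embedding theorem is precisely what removes the non-rational branch. A lesser difficulty, in the first step, is that the target is only finitely nondegenerate, so the identification of CR transversality with a Hopf-type normal-derivative condition and the determination of the complex curves inside $\mathcal{X}$ must be carried out directly rather than quoted from general reflection or regularity theorems.
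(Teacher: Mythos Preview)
Your high-level architecture is sound, and two pieces are actually cleaner than what the paper does: reducing $G\equiv0$ to an equidimensional self-map of $\mathbb{H}^5$ (hence an automorphism, hence the class of $\ell$) is a nice shortcut, and your treatment of (b) via complex curves tangent to the Levi kernel of $\mathcal{X}$ is more geometric than the paper's direct weighted-degree induction. But the step you yourself flag as the ``main obstacle'' is a genuine gap, and your proposed split does not resolve it.

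The dichotomy $G\equiv0$ versus $G\not\equiv0$ isolates only $\ell$: both $r$ and $\iota$ have nonvanishing $\zeta$-component, and ``the hierarchy separates into a rational branch and an isometric branch'' is an observation about the answer, not an argument. The paper's split is different and is the heart of the proof. After normalization the $\zeta$-component reads $\phi=\lambda w+zA_{\alpha,\beta}z^t+\cdots$ with $A_{\alpha,\beta}=\left(\begin{smallmatrix}\alpha&\beta\\\beta&-\alpha\end{smallmatrix}\right)$ real, and one splits on $\lambda$. When $\lambda=0$, determining $H$ and $H_w$ along the first Segre set $\{w=0\}$ and reflecting produces a closed system of holomorphic identities in $(z,w)$ that is solved explicitly, yielding the rational family; the rank of $A_{\alpha,\beta}$ then separates $\ell$ from $r$. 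When $\lambda\ne0$, a short computation with the mapping equation forces $\alpha=\beta=0$ --- this is the key lemma you are missing. It says the geometric rank (equivalently the tensor $\mathcal{A}'(H)$) vanishes identically, and \emph{that} is the hypothesis needed to invoke the boundary characterization of isometries and then Xiao--Yuan to conclude equivalence with $\iota$. You cite that characterization but never produce its hypothesis. Two smaller issues: appealing to Forstneri\v{c}-type algebraicity does not close the system here (the target is only $2$-nondegenerate, and the paper never uses algebraicity --- it works entirely with finitely many Segre-set identities); and your Hopf-lemma dichotomy presupposes both that $\Psi$ is plurisubharmonic on the range of $\tilde H$ and that $\tilde H$ sends the Siegel side into $\overline{\mathcal{S}}$, neither of which is immediate.
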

Here and throughout this article we write \(z = (z_1,z_2) \in \mathbb{C}^2\).
The linear map \(\ell(z,w)\)  and the irrational map \(\iota(z,w)\) are holomorphic isometric embeddings with respect to certain K\"ahler metrics on one-sided neighborhoods of the source and the target. These maps were known earlier in Reiter--Son \cite{rs22}. Precisely, two maps \(\ell(z,w)\) and \(\iota(z,w)\) can be constructed from the well-known isometric embeddings from the 3-ball into the type IV domain of four dimensions given in Xiao--Yuan \cite{xiao2020holomorphic} while the rational map \(r(z,w)\) appeared in Reiter--Son \cite{rs22}. Actually, it was conjectured in that paper that there are only three equivalence classes of CR maps represented by these maps. Thus, the present paper confirms that conjecture.

We also point out that, in contrast with the case of mapping between spheres, the rigidity for the CR codimension one maps only fails when the source is of one CR dimension \cite{Webster79b}.

As briefly mentioned above, the lower dimensional case was treated in Reiter--Son \cite{rs22}. The result was then applied to characterize the proper holomorphic maps from \(\mathbb{B}^2\) into \(D^{\mathrm{IV}}_3\subset \mathbb{C}^3\) which extend sufficiently smooth to a boundary point. In higher dimensions, proper holomorphic maps from a ball into the a classical domain of type IV in ``low codimension'' exhibit a rigidity property, as a consequence of the work of Xiao--Yuan \cite{xiao2020holomorphic}. Theorem~\ref{main} above leads to a characterization of proper holomorphic maps in the case that has been left open in these papers, namely, the case of proper holomorphic maps from a ball in \(\mathbb{C}^3\) into the classical domain of type IV in \(\mathbb{C}^4\). To be more precise, we recall that the ball in \(\mathbb{C}^3\) is given by
\[ 
\mathbb{B}^3 = \left\{(z_1,z_2,w) \in \mathbb{C}^3 \mid |w|^2 + z \overline{z^t} = 1, \ z=(z_1,z_2) \right\},
\]
and the classical domain of type IV domain is given by Cartan \cite{cartan1935domaines} (cf. Hua \cite{hua1963harmonic})
\[ 
D^{\mathrm{IV}}_4 = \left\{Z \in \mathbb{C}^4 \mid 1 - 2 Z \overline{Z}^t + \left|Z Z^t\right|^2 > 0, \ |Z|\leq 1 \right\},
\]
where \(Z^t\) is the transposition of \(Z\). If \(H\) and \(H'\) are two holomorphic maps from \(\mathbb{B}^3\) into \(D^{\mathrm{IV}}_4\), we say that \(H\) and \(\tilde{H}\) are equivalent if there exist automorphisms \(\gamma\in \Aut(\mathbb{B}^3)\) and \(\psi\in \Aut(D^{\mathrm{IV}}_4)\) such that $$ H = \gamma^{-1} \circ \tilde{H} \circ \psi.$$ Then we can state our characterization of proper holomorphic maps as follows.

\begin{corollary}\label{cor:balld4}
Let $H\colon \mathbb{B}^3 \to D^{\mathrm{IV}}_4$ be a proper holomorphic map which extends smoothly to some boundary point $p\in \partial \mathbb{B}^3$. Then \(H\) is equivalent to one of the following pairwise inequivalent maps:
\begin{enumerate}[(i)]
	\setlength{\itemsep}{3pt}
	\item 
	\begin{equation}
		R_0(z,w) = \left(\dfrac{z}{\sqrt{2}},\dfrac{2 w^2+2 w-zz^t}{4 (w+1)},\dfrac{i \left(2 w^2+2 w+zz^t\right)}{4 (w+1)}\right),
	\end{equation}
	\item 
	\begin{equation}
		I(z,w) = \left(z,\ w,\ 1-\sqrt{1-zz^t-w^2}\,\right)\bigl/ \sqrt{2},
	\end{equation}
	\item 
	\begin{equation}
		P(z_1,z_2,w) = \left(z_1,\ z_2w,\ \dfrac{w^2-z_2^2}{2},\ \dfrac{i(w^2+z_2^2)}{2}\right),
	\end{equation}
	where \(z = (z_1,z_2)\) so that \(zz^t = z_1^2 + z_2^2\).
\end{enumerate}
\end{corollary}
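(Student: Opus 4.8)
The plan is to reduce Corollary~\ref{cor:balld4} to the local classification of Theorem~\ref{main} by analysing the boundary behaviour of \(H\). First one fixes the standard identifications. Through a generalized Cayley transform, \(D^{\mathrm{IV}}_4\) is biholomorphic to the tube \(T_\Omega\) over the future light cone \(\Omega\subset\RR^4\); as this is a tube-type, hence rank-two, bounded symmetric domain, \(\partial D^{\mathrm{IV}}_4\) is the union of exactly two \(\Aut(D^{\mathrm{IV}}_4)\)-orbits: the Shilov boundary \(S\), which is a totally real submanifold of real dimension \(4\), and an open dense ``smooth part'' which, being a single orbit, is CR equivalent near each of its points to the model \(\mathcal{X}\). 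Likewise \(\mathbb{B}^3\) is biholomorphic to the Siegel domain bounded by \(\mathbb{H}^5\), and \(\partial\mathbb{B}^3\) with one point removed is CR equivalent to \(\mathbb{H}^5\).

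A proper holomorphic map \(H\colon\mathbb{B}^3\to D^{\mathrm{IV}}_4\) has generic rank \(3\): otherwise its generic fibres would be positive-dimensional, hence non-compact, analytic subsets of \(\mathbb{B}^3\), contradicting properness. Shrinking to a neighbourhood \(V\) of \(p\) in \(\mathbb{C}^3\), smoothness and properness force \(H(\partial\mathbb{B}^3\cap V)\subset\partial D^{\mathrm{IV}}_4\), and I would first show this image lies in the smooth part of \(\partial D^{\mathrm{IV}}_4\). If instead \(H(p)\in S\), then \(H(\partial\mathbb{B}^3\cap V)\subset S\) because the smooth part is open, and since \(\partial\mathbb{B}^3\) and \(S\) are real-analytic and \(S\) is totally real, a reflection principle lets \(H\) extend holomorphically to a full neighbourhood \(V'\) of \(p\), still carrying the real hypersurface \(\partial\mathbb{B}^3\cap V'\) into \(S\). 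Along that hypersurface the \(\CC\)-linear differential \(dH_q\) sends the complex tangent plane of \(\partial\mathbb{B}^3\) into the totally real space \(T_{H(q)}S\) and hence annihilates it, so \(\rk dH_q\le1\) there; but \(\{q\in V'\mid\rk dH_q\le1\}\) is a complex-analytic subset of \(V'\) containing a real hypersurface, so it is all of \(V'\), whence \(\rk H\le1\) on \(\mathbb{B}^3\) by analytic continuation, contradicting generic rank \(3\). Thus \(H(\partial\mathbb{B}^3\cap V)\) lies in the smooth part of \(\partial D^{\mathrm{IV}}_4\), and, after composing with the biholomorphisms and local CR equivalences above, the germ of \(H\) at \(p\) becomes a germ of \(C^2\)-smooth CR map from \(\mathbb{H}^5\) into \(\mathcal{X}\).

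Now Theorem~\ref{main} applies. Alternative~(b) is excluded by the same rank count, since a map equivalent to \((z,w)\mapsto(0,\phi(z,w),0)\) has image in a complex line and hence rank at most \(1<3\). So \(H\) is CR transversal and its germ at \(p\) is equivalent to the germ at the origin of exactly one of \(\ell\), \(r\), \(\iota\). Transporting each of these three germs back through the Cayley transforms yields a holomorphic germ at a boundary point of \(\mathbb{B}^3\) which extends single-valuedly to a holomorphic map on all of \(\mathbb{B}^3\): rational in the cases coming from \(\ell\) and \(r\), and in the case coming from \(\iota\) still single-valued because the radicand \(1-zz^t-w^2\) has positive real part on \(\mathbb{B}^3\) by the Cauchy--Schwarz inequality, so its principal square root is well defined there. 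Since the equivalences in Theorem~\ref{main} are induced by the boundary actions of \(\Aut(\mathbb{B}^3)\) and \(\Aut(D^{\mathrm{IV}}_4)\), uniqueness of analytic continuation shows that \(H\) is equivalent, as a map \(\mathbb{B}^3\to D^{\mathrm{IV}}_4\), to this global map; a direct computation then identifies the three resulting maps with \(R_0\), \(I\), \(P\) (the irrational germ \(\iota\) giving the irrational map \(I\)) and verifies that each of these is itself a proper holomorphic map into \(D^{\mathrm{IV}}_4\). Finally, were two of \(R_0\), \(I\), \(P\) equivalent, restricting an equivalence to one-sided boundary neighbourhoods and using that all boundary germs of a fixed map are mutually equivalent (Theorem~\ref{main}(a)) would force two of \(\ell\), \(r\), \(\iota\) to be equivalent, contradicting Theorem~\ref{main}; hence \(R_0\), \(I\), \(P\) are pairwise inequivalent.

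I expect the main obstacle to be the boundary-stratification step — arguing rigorously that a proper map which is merely \(C^\infty\) at one boundary point cannot send a boundary neighbourhood into the Shilov boundary, so that Theorem~\ref{main} genuinely applies — together with the careful bookkeeping needed to transport germs and the equivalence relation through the several Cayley transforms. Once the problem has been localized in this way, excluding alternative~(b) and computing the three explicit maps \(R_0\), \(I\), \(P\) is routine.
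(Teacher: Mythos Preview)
Your overall strategy matches the paper's: localize via Cayley-type transforms to a germ \(\mathbb{H}^5\to\mathcal{X}\), invoke Theorem~\ref{main}, rule out the nowhere-transversal case by properness, and then globalize. Two points deserve attention.

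First, a minor logical slip in the boundary-stratification step. From ``the smooth part is open'' you cannot conclude that \(H(p)\in S\) forces \(H(\partial\mathbb{B}^3\cap V)\subset S\); openness of the smooth part only gives the opposite implication (if \(H(p)\) lies in the smooth part then so do nearby images). What you actually need is the dichotomy: either some nearby boundary point maps into the smooth part, in which case you simply replace \(p\) by that point, or else an entire boundary neighbourhood maps into \(S\), and then your reflection/rank argument applies. The paper, incidentally, does not discuss this point at all and simply normalizes \(H(p)\) to a smooth boundary point; your instinct to address it is sound, but the argument needs the small repair above.

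Second, and more substantively, your globalization step contains a genuine gap. You write that ``the equivalences in Theorem~\ref{main} are induced by the boundary actions of \(\Aut(\mathbb{B}^3)\) and \(\Aut(D^{\mathrm{IV}}_4)\)'', but Theorem~\ref{main} only provides equivalences via \(\Aut(\mathbb{H}^5)\) and \(\Aut(\mathcal{X})\), i.e.\ via \emph{local} CR automorphisms of the source and target models. Transported back, a local CR automorphism of \(\mathcal{X}\) becomes a germ of a local CR automorphism of the smooth boundary of \(D^{\mathrm{IV}}_4\); the assertion that every such germ extends to a global element of \(\Aut(D^{\mathrm{IV}}_4)\) is precisely an Alexander-type theorem for the type~IV domain, and it is not automatic. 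The paper invokes the result of Mok--Ng \cite{mok2012germs} (with a simpler proof for \(D^{\mathrm{IV}}_3\) in \cite{rs22}) at exactly this point. Without it you cannot pass from ``the germs are locally equivalent'' to ``the global maps \(\mathbb{B}^3\to D^{\mathrm{IV}}_4\) are equivalent''. You should either cite this theorem or supply an argument for it; everything else in your outline is essentially the paper's proof.
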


\begin{remark}\label{rem:1}\rm 	
The regularity assumption for the map can be reduced, see Mir \cite{Mir17}, Xiao \cite{Xiao20}, Kossovskiy--Lamel--Xiao \cite{KLX21}, Greilhuber \cite{Greilhuber20}. The maps \(R_0\) and \(I\) are isometric embeddings of the ball into the classical domain with respect to the canonical Bergman metrics, as appeared in Mok \cite{Mok2018}, Upmeier--Wang--Zhang \cite{upmeier2019holomorphic}, Xiao--Yuan \cite{xiao2020holomorphic}, while \(P\) is not isometric.

The map \(P\) was found in Reiter--Son \cite{rs22}, which is related to two quadratic polynomial maps in the lower dimensional case, and was discovered in that paper. When restricting to the complex hyperplane \(z_1 = 0\), we obtain
\[ 
P(0,z_2,w) = \left(0,z_2w, \dfrac{w^2-z_2^2}{2}, \dfrac{i(z_2^2+w^2)}{2}\right)
\]
which leads to the map sending \(\mathbb{B}^2\subset \mathbb{B}^3\) into \(D^{\mathrm{IV}}_3\) and appears in that paper. Similarly, when restricting to \(\{z_2 = 0\}\), we obtain
\[ 
P(z_1,0,w) = \left(z_1,0,\dfrac{w^2}{2}, \dfrac{iw^2}{2}\right)
\]
which is almost the same as the previous known map. Finally restricting to \(\{w=0\}\), we obtain
\[ 
P(z_1,z_2,0) = \left(z_1,\dfrac{-z_2^2}{2}, \dfrac{iz_2^2}{2}\right)
\]
which also leads to a map known in lower dimensions.
\end{remark} 

The rest of this paper can be summarized as follows. In Section~\ref{sec:2}, we introduce the related models and the stability groups. In Section~\ref{sec:3}, we give a normalization of CR maps, which is the first step in our proof. We also discuss the notion of geometric rank of the maps and its relation to the extendability to a local isometric embedding of one-sided neighborhoods of the source and target, which will shorten our proof of the main characterization (as compared to Reiter--Son \cite{rs22}). In Section~\ref{sec:4}, we give a detailed proof of the main theorem and, finally, in Section~\ref{sec:5} we prove the  corollary on the characterization of proper holomorphic maps and provide further examples and constructions.

\section{Preliminaries}\label{sec:2}
\subsection{The tube over 
	the future light cone}
	
The tube over the future light cone is the tube manifold \(\mathcal{T}: = \mathcal{C} \times i\mathbb{R}^4\), where 
\begin{equation}\label{flc}
	\mathcal{C} = \left\{(x_1, x_2, x_3, x_4) \in \mathbb{R}^4 \mid x_1^2 + x_2 ^2 + x_3^2 = x_4^2,\ x_4 > 0\right\}
\end{equation}
is the future light cone in \(\mathbb{R}^4\). The tube over the future light cone in general dimension (which can be defined completely similar) is an interesting model for Levi-degenerate hypersurfaces which appeared in various papers, see, for examples, Fels--Kaup \cite{fels2007cr}, Gregorovi\v c--Sykes \cite{gs}, and the references therein. As a real hypersurface in \(\mathbb{C}^4\), this tube has the defining function
\begin{equation}\label{tflc}
	\rho(z_1,z_2,z_3,w) := -\left(\Re w\right)^2 + \left(\Re z_1\right)^2 + \left(\Re z_2\right)^2 +\left(\Re z_3\right)^2,
\end{equation}
with \(\Re w > 0\). The holomorphic map 
\begin{equation}\label{t2m}
	F(z_1,z_2,z_3,w) = \left(\frac{2z}{1+w-z_3}, \ \frac{1-w+z_3}{1+w-z_3}, \ \frac{2i(w+w^2 - zz^t + z_3 - z_3^2)}{1+w-z_3}\right), 
\end{equation}
has a singular locus  \(\{1+ w - z_3 = 0\}\), sends \(p=(0,0,-1/2,1/2)\) to the origin \((0,0,0,0)\), and sends a neighborhood of \(p\) in \(\mathcal{T}\) into the hypersurface \(\mathcal{X}\). This shows the local CR equivalence of  \(\mathcal{T}\) and \(\mathcal{X}\).

Taking the (local) inverse of \(F\), we obtain the  map 
\begin{equation}\label{x2t}
	G(z,\zeta,w) = \left(\frac{z}{1+\zeta},\ \frac{-2 + zz^t + 2\zeta - iw(1+\zeta)}{4(1+\zeta)},\ \frac{2 + zz^t - 2\zeta - iw(1+\zeta)}{4(1+\zeta)}\right),	
\end{equation}
which sends the origin to the point \(p=(0,0,-1/2,1/2)\) and \(\mathcal{X}\) into \(\mathcal{T}\).

Using \(G\), we can easily construct CR maps from the Heisenberg hypersurface \(\mathbb{H}^5\) into the tube over the future light cone \(\mathcal{T}\). Precisely, composing this map with the linear map \(\ell(z,w) = (z,0,w)\) from \(\mathbb{H}^5\) into \(\mathcal{X}\), we obtain the following quadratic map
\begin{equation}\label{h2t0}
	T_1(z,w) = (G\circ \ell )(z,w) =  \left(z,\ \frac{1}{4} (zz^t -iw-2),\ \frac{1}{4}(zz^t-iw+2)\right),
\end{equation}
which sends the origin to \(p=(0,0,-1/2,1/2)\) and sends \(\mathbb{H}^5\) into \(\mathcal{T}\).

Composing \(G\) with the rational map \(r(z,w)\), we obtain
\begin{align}\label{h2t1}
T_2& (z,w) 
= (G\circ r)(z,w)  \notag \\
&= \left(\frac{z(I + iwA)}{1 - w^2 + 2zAz^t},\ \frac{
	2w^2 - iw -2 + z(I +4A)z^t}{4(1 - w^2 + 2zAz^t)},\ \ \frac{
	2w^2 + iw + 2 + z(I - 4A)z^t}{4(1 - w^2 + 2zAz^t)}\right),
\end{align}
where, as before, \(A = \mathrm{diag}(1,-1) \in \mathrm{Mat}(2,\mathbb{R})\).

Finally, by composing \(G\) with the irrational map, we obtain an irrational map from \(\mathbb{H}^5\) into \(\mathcal{X}\). However, the formula for this map is quite complicated and we refrain from presenting the details here.

These three maps represent all equivalence classes of CR maps from the Heisenberg hypersurface into the tube over the future light cone.

\subsection{Stability groups}
The stability groups (at the origin) \(\Aut_0(\mathbb{H}^5)\) of the Heisenberg hypersurface \(\mathbb{H}^5\) and \(\Aut_0(\mathcal{X})\) of the local model \(\mathcal{X}\) are important for us. We use them to normalize CR maps from \(\mathbb{H}^5\) into \(\mathcal{X}\). \(\Aut_0(\mathbb{H}^5)\) has a well-known and simple parametrization as follows:  Let $s>0, u\in \CC, |u|=1, c=(c_1,c_2)\in \CC^2, r\in \RR, U \in U(2)$, 
\begin{align*}
U = \left(\begin{array}{cc}
u a & - u b\\
\bar b & \bar a
\end{array} \right), 
\end{align*}
with $a,b \in \CC$ satisfying $|a|^2 + |b|^2 = 1$ and
\begin{align*}
\delta =  1 - 2i z \bar c^t  + (r- i c \bar c^t)w,
\end{align*}
such that the stability group $\Aut_0(\mathbb{H}^5)$ is given by the following automorphisms:
\begin{align*}
(z,w) \mapsto \psi_{s,u,U,c,r}(z,w) = \left(s (z + c w) U,  s^2 w \right)/\delta.
\end{align*}
The group \(\Aut_0(\mathcal{X})\)  can be computed by integrating the vector fields in its symmetry algebra which vanish at the origin. To make our formulas more concise, we put
\[ 
	\delta = \delta(z,w) = 1 - (r'+i a \bar{a}^t) w - 2i z \bar{a}^t+ i\, \overline{aa^t} (w\zeta + i z z^t),
 \]
 for \(a = (a_1, a_2) \in \mathbb{C}^2\), \(u'\in \mathbb{C}\), \(|u'| = 1\), \(r' \in \mathbb{R}\), and \(P \in \mathrm{O}(2)\). The stability group consists of holomorphic maps of the form \(\gamma = (\gamma_1, \dots, \gamma_4)\), where
\begin{align}
\eta(z,w) & = s' u' (z+ w a - (w\zeta + i z z^t)\bar{a})P/\delta, \quad \eta = (\gamma_1,\gamma_2) \\
\gamma_3(z,w) & = {u'}^2\left(\zeta -2 z a^t -i aa^t w  -(r'-i \bar{a}a^t )(w\zeta + i z z^t)\right)/\delta, \\
\gamma_4(z,w) & = {s'}^2 w/\delta,
\end{align}
where $s'>0$.
This form is completely similar to the case of the tube over the future light cone in \(\mathbb{C}^3\).
We parametrize elements of \(\Aut_0(\mathcal{X})\) by
\begin{align*}
(z,w) \mapsto \psi'_{s',u',P,a,r'}(z,w) = \gamma(z,w).
\end{align*}

\section{Normalization, 
	geometric rank, and isometric embeddings}\label{sec:3}
	
In this section, we explain the first step in our proof of the main result, namely, the normalization process. This is used as starting point in order to introduce the notion of geometric rank for a map. This is similar to Reiter--Son \cite{rs22} and is ultimately motivated by Huang \cite{Huang99}. Based on an idea originated in Lamel--Son \cite{lamel2019cr} and Reiter--Son \cite{rs22}, we introduce a tensorial invariant for CR transversal maps from a sphere or hyperquadric into the boundary of a classical domain of type IV and prove a version of Huang--Lu--Tang--Xiao \cite{huang2020boundary} boundary characterization of isometric embeddings.

\subsection{Normalization}
Write $H = (f,\phi,g)=(f_1, f_2, \phi, g) \colon \mathbb{C}^3 \to \mathbb{C}^4$ for a holomorphic map sending the Heisenberg hypersurface $\Im w - z \overline{z^t} = 0$ into the model \(\mathcal{X}\) defined by
\[
(1- |\zeta|^2) \Im w - z \overline{z^t} - \Re \left(\bar \zeta z z^t \right) = 0
\]
and maps $(0,0,0)$ to $(0,0,0,0)$. 
As \(H(U\cap\mathbb{H}^5)\subset \mathcal{X}\), the following identity holds
\begin{multline}\label{e:me}
\bigl(1-\phi(z,\wba+2i z \zba^t) \bar{\phi}(\zba,\wba)\bigr) \bigl(g(z,\wba+2i z \zba^t) - \gba(\zba,\wba)\bigr)- f(z,\wba+2i z \zba^t) \bar f^t(\zba,\wba)\\ - \frac{1}{2}\left\{\bar{\phi}(\zba,\wba)F(z,\wba+2i z \zba^t) + \phi(z,\wba+2i z \zba^t)\bar{F}(\zba,\wba)  \right\} = 0,
\end{multline}
where \(F = f f^t\). We shall call \eqref{e:me} the \emph{mapping equation}.

If \(H\) is a germ at the origin of smooth CR maps from the Heisenberg hypersurface to \(\mathcal{X}\), then by a ``standard'' construction, we can associate \(H\) to a formal holomorphic map, still denoted by \(H \colon (\mathbb{C}^3, 0) \to (\mathbb{C}^4, 0)\) (i.e., the components \(f_1, f_2, \phi, g\) of \(H\) are formal power series in \((z,w)\) with no constant term) which sends the germ at the origin of the Heisenberg hypersurface into the model \(\mathcal{X}\) in formal sense (that is, the mapping equation holds in the ring of formal power series \(\mathbb{C}[[z,w,\bar{z},\bar{w}]])\), we refer to Baouendi--Ebenfelt--Rothschild \cite{ber} for the details of this construction. Thus, in what follows, we shall view \eqref{e:me} as an equation in \(\mathbb{C}[[z,w,\bar{z},\bar{w}]]\).

Using the stability groups of the source and the target, we can bring the map into the following partial normal form. 

\begin{lemma}
Let $p \in \mathbb{H}^5$ and $H=(f,\phi,g)$ be a germ at $p$ of a smooth transversal CR map which sends $\mathbb{H}^5$ into $\mathcal{X}$. Then the germ $(H,p)$ is equivalent to the germ at the origin of a CR map $\widetilde H=(\widetilde f,\widetilde \phi, \widetilde g)$ which is of the following form:
\begin{align*}
\widetilde f(z,w) & = z + \frac{i}{2} w(z\widetilde A) + \nu w^2  + O(3), \\
\widetilde \phi(z,w) & = \lambda w + z\widetilde B z^t + w z \mu^{t} + \sigma w^2 + O(3),\\ 
\widetilde g(z,w) & = w + O(3),
\end{align*}
where $\widetilde A, \widetilde B \in \mathrm{Mat}(2\times 2; \RR)$, $\nu=(\nu_1,\nu_2) \in \CC^2$ and $\lambda, \sigma \in \CC$. The entries of the matrices we denote by $\alpha_{ij}$ and $\beta_{ij}$ respectively and it holds that $\widetilde B$ is symmetric.

The same holds for a transversal formal map \(H\) at the origin sending \(\mathbb{H}^5\) into \(\mathcal{X}\).
\end{lemma}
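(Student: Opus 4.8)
The plan is to construct the normalizing equivalence explicitly as a composition of an automorphism of the source in $\Aut_0(\mathbb H^5)$ and an automorphism of the target in $\Aut_0(\mathcal X)$, reading off the required parameters order by order from the Taylor expansion of $H$. First I would move $p$ to the origin using an element of $\Aut(\mathbb H^5)$; since $H(p)$ lies in $\mathcal X$ (or its formal counterpart) and $\mathcal X$ is homogeneous in the sense that $\Aut(\mathcal X)$ acts transitively near the origin, we may also assume $H$ fixes the origin, so the components $f,\phi,g$ have no constant term. Transversality (CR transversality) of $H$ at $p$ translates, after these reductions, into the statement that the coefficient of $w$ in $g$ is a nonzero real multiple of something we can normalize; I would use the dilation parameters $s$ in $\Aut_0(\mathbb H^5)$ and $s'$ in $\Aut_0(\mathcal X)$ together with the $U(2)$/$\mathrm O(2)$ rotations to arrange $\widetilde g(z,w)=w+O(2)$ and $\widetilde f(z,w)=z+O(2)$, with the unitary part of $\widetilde f$ normalized to the identity on the linear-in-$z$ term. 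The weighted expansion here should assign weight $1$ to $z,\bar z$ and weight $2$ to $w,\bar w$, so that $O(3)$ means the sum of orders (in this weighting) at least three.

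The heart of the argument is the analysis of the weight-$2$ terms, i.e. the coefficients of $w$, $z_iz_j$, $wz_i$ and $w^2$ in each component, using the mapping equation \eqref{e:me}. Substituting the general Taylor expansions of $f,\phi,g$ into \eqref{e:me}, expanding in powers of $z,\bar z,w,\bar w$ (after the substitution $w\mapsto \bar w+2iz\bar z^t$), and collecting terms of low weight yields a system of linear relations among the coefficients. These relations already force: $\widetilde g$ has no quadratic terms (so $\widetilde g(z,w)=w+O(3)$); the linear-in-$z$ part of $\widetilde f$ carries only the "shear" term $\tfrac{i}{2}w(z\widetilde A)$ with $\widetilde A$ real (reality coming from comparing conjugate coefficients, just as in the $\mathbb C^3$ case), together with a pure $\nu w^2$ term; and $\widetilde\phi$ has the stated shape $\lambda w+z\widetilde B z^t+wz\mu^t+\sigma w^2+O(3)$ with $\widetilde B$ real symmetric (symmetry is automatic since $zz^t$ is symmetric, reality again from conjugate comparison). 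The remaining freedom in $\Aut_0(\mathbb H^5)\times\Aut_0(\mathcal X)$ — the parameters $c$, $r$, $a$, $r'$, the phases $u,u'$, and the non-identity parts of $U$, $P$ — is then spent to kill all the other quadratic coefficients that are not listed (for instance the $wz$ term in $\widetilde f$ beyond the $\widetilde A$-shear, any quadratic-in-$z$ terms in $\widetilde f$, and the linear coefficient structure in $\widetilde g$), leaving exactly the displayed normal form. I would organize this as: (1) normalize weight-$1$ data; (2) normalize the $g$-component through weight $2$; (3) normalize the $f$-component through weight $2$; (4) normalize the $\phi$-component through weight $2$; at each stage checking that the automorphism used does not disturb the already-normalized lower-weight data.

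The main obstacle I expect is bookkeeping: the stability group $\Aut_0(\mathcal X)$ has a somewhat intricate denominator $\delta$ and the $\mathbb C^4$ target means one must track four components simultaneously through \eqref{e:me}, so the linear algebra identifying which parameter kills which coefficient is delicate, and one must verify there are no obstructions (i.e. that the map from the $6$- or $7$-dimensional parameter space of residual automorphisms onto the space of "removable" quadratic coefficients is onto). A secondary point is to make sure the argument is uniform for genuine smooth CR maps and for formal maps: this is handled by the remark in the excerpt that a smooth CR map into $\mathcal X$ induces a formal map satisfying \eqref{e:me} in $\mathbb C[[z,w,\bar z,\bar w]]$, after which the whole normalization is purely formal and the normalizing automorphisms are (convergent) elements of the finite-dimensional groups $\Aut_0(\mathbb H^5)$, $\Aut_0(\mathcal X)$, hence apply equally in both settings. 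The reality assertions for $\widetilde A$ and $\widetilde B$ and the symmetry of $\widetilde B$ follow, as in Reiter--Son \cite{rs22}, by matching the coefficient of a monomial with that of its conjugate in the (real) mapping equation.
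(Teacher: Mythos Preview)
Your approach is essentially the same as the paper's: reduce to the origin by homogeneity, then apply successive elements of $\Aut_0(\mathbb{H}^5)\times\Aut_0(\mathcal{X})$ (the paper uses $s,U$ and the target parameter $a$ to normalize $f_z(0)=I$, $\phi_z(0)=0$, then the source translation $c$ to kill $f_w(0)$, then $r$ to kill $g_{w^2}(0)$), while the remaining constraints ($g(z,0)=0$, $g_{zw}(0)=0$, $f_{z_iz_j}(0)=0$, and the reality/symmetry of $\widetilde A,\widetilde B$) are forced by the mapping equation rather than by the automorphisms. One small correction: the $O(3)$ in the lemma is ordinary total degree, not the weighted degree you propose (observe that $\nu w^2$ and $\sigma w^2$ appear explicitly before the $O(3)$), so your ``weight-$2$'' label is off---but the list of coefficients you plan to analyze is already the right one, so this is only a bookkeeping slip.
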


\begin{proof}
Without loss of generality we can assume that $H$ sends the origin to the origin. Moreover, we can view \(H\) as a formal holomorphic map which satisfies the mapping equation \eqref{e:me}. 

We write $E_m := f_{z_m}(0)$ for $m=1,2$. From the mapping equation it follows that $g(z,0) = 0$, $g_w(0) = \|E_1\|^2 = \|E_2\|^2$ and $E_1 {\overline{E_2}}^t = 0$. The transversality of $H$ implies $g_w(0)>0$ such that $E_1$ and $E_2$ are linearly independent. We apply automorphisms successively and  write $H_{[k+1]} := \varphi_k' \circ H_{[k]} \circ \varphi_k$, where $H_{[0]} := H$, $\varphi_k'$ and $\varphi_k$ are automorphisms of the source and target respectively, parametrized as above and $k \in \NN$.
We denote by $E\in \mathrm{Mat}(2\times 2; \CC)$ the matrix whose $j^{th}$ column is $E_j$. Then we have  
\begin{align*}
H_{[1] z}(0) = \left(u u' s s'  E U,{u'}^2 s (\phi_z(0) - 2 i a' E ) U,0 \right).
\end{align*}
Then we choose $s, U$ and $a$ such that $H_{[1]z}(0) = (I,0)$, where $I$ is the $2 \times 2$-identity matrix. Considering $H_{[2]} = \varphi_2' \circ H_{[1]} \circ \varphi_2$ with $s = 1/s', a' = 0, U = 1/u' I$, we obtain $g_{[1]w}(0) = 1$ and we have
\begin{align*}
H_{[2] w}(0) = \left(c + u' f_{[1]w}(0)/s', {u'}^2 \phi_{[1]w}(0)/{s'}^2,1 \right).
\end{align*}
Choosing $c$ accordingly, this allows us to assume $f_{[2] w} = 0$. This implies, by considering the mapping equation, that $g_{[2] zw }(0)= 0$ and all $z_1$ and $z_2$ derivatives of $f_{[2]}$ of order $2$ are $0$. Finally, since by the mapping equation it holds that $g_{[2]w^2}(0) \in \RR$, we can choose $r$ in order to assume $g_{[2] w^2}(0)=0$.
\end{proof}

Applying \(\partial_{z_1}^2\partial_{\zba_1}\partial_{\zba_2}\) to the mapping equation and evaluating the result at the origin, we obtain
\[ -\bar{\phi}^{(1,1,0)}-4 i f_2^{(1,0,1)} = 0. \]
Similarly,
\[-\bar{\phi}^{(1,1,0)}-4 i f_1^{(0,1,1)} = 0. \]
We deduce from these two equations that
\[ \overline{\beta_{12}} = 2\alpha_{12} = 2\alpha_{21}. \]

Applying \(\partial_{z_1}^2\partial_{\zba_2}^2\) to the mapping equation and evaluating the result at the origin, we obtain
\[ 
\phi ^{(2,0,0)} + \bar{\phi} ^{(0,2,0)} = 0.
\]
We find that 
\[ 
\beta_{11} = - \beta_{22}
\]
and both are real.

Next, applying \(\partial_{z_1}^2\partial_{\zba_1}^2\) to the mapping equation and evaluating the result at the origin, we obtain
\[ \alpha_{11} = \beta_{11}. \]
Similarly, we also find that
\[ 
\alpha_{22} = \beta_{22}.
\]
Next, applying \(\partial_{z_1}\partial_{z_2}\partial_{\zba_2}^2\) to the mapping equation and evaluating the result at the origin, we obtain
\[ \beta_{12} = 2\alpha_{21}.\]

In summary, we can rewrite the map as follows.
\begin{proposition}\label{prop:2.1} Let \(U\) be an open neighborhood of a point \(p\) in \(\mathbb{C}^3\), \(p\in \mathbb{H}^5\), and let \(H \colon U \to \mathbb{C}^4\) be a holomorphic map such that \(H(U \cap \mathbb{H}^5) \subset \mathcal{X}\). Then there exist automorphisms \(\phi\) and \(\psi\) of \(\mathbb{H}^5\) and \(\mathcal{X}\), respectively, which satisfy \(\psi(p) = 0\), \(\gamma(H(p)) = 0\), and
$$\gamma \circ H \circ \psi^{-1} = (f,\phi, g),$$
where \(f, \phi\), and \(g\) take the following form
\begin{equation}\label{e:normalform}
	\begin{cases}
		f = z + \frac{i}{2}w(zA_{\alpha,\beta}) + w^2 \nu + O(3), \\
		\phi = \lambda w + zA_{\alpha,\beta} z^t + w z \mu^t+ \sigma w^2 + O(3),\\ 
		g = w + O(3),
	\end{cases}
\end{equation}
where
\[
A_{\alpha,\beta} = \begin{pmatrix}\alpha & \beta \\ \beta & -\alpha \end{pmatrix} \in \mathrm{Mat(2\times 2; \mathbb{R})}, \quad \nu = (\nu_1,\nu_2) \in \CC^2, \quad \mu = (\mu_1,\mu_2) \in \CC^2.\]
Moreover, the rank of \(A_{\alpha,\beta}\) does not depend on the pair \((\gamma,\psi)\) satisfying the conditions above.
\end{proposition}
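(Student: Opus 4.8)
The plan is to reduce the statement to an analysis of the residual isotropy of the partial normal form \eqref{e:normalform}. Suppose two pairs $(\gamma_1,\psi_1)$ and $(\gamma_2,\psi_2)$ both bring $H$ into the form \eqref{e:normalform}, producing matrices $A^{(1)}=A_{\alpha_1,\beta_1}$ and $A^{(2)}=A_{\alpha_2,\beta_2}$, and let $H_i=\gamma_i\circ H\circ\psi_i^{-1}$ be the corresponding normalized maps. Putting $\gamma:=\gamma_2\circ\gamma_1^{-1}\in\Aut_0(\mathcal{X})$ and $\psi:=\psi_1\circ\psi_2^{-1}\in\Aut_0(\mathbb{H}^5)$ we get $H_2=\gamma\circ H_1\circ\psi^{-1}$ with $H_1$ and $H_2$ both of the form \eqref{e:normalform}. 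Since $A_{\alpha,\beta}$ is read off from the $2$-jet of the map at the origin, it suffices to prove the following jet-theoretic claim: if $H_1$ is a formal map of the form \eqref{e:normalform} with matrix $A$, if $(\gamma,\psi)\in\Aut_0(\mathcal{X})\times\Aut_0(\mathbb{H}^5)$, and if $\gamma\circ H_1\circ\psi^{-1}$ is again of the form \eqref{e:normalform} with matrix $A'$, then $\rk A'=\rk A$.

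I would establish this by computing the residual isotropy group together with its induced action on $A$, proceeding just as in the proof of the lemma above and using the explicit parametrizations $\psi=\psi_{s,u,U,c,r}$ and $\gamma=\psi'_{s',u',P,a,r'}$ from Section~\ref{sec:2}. Matching linear parts of $\gamma\circ H_1\circ\psi^{-1}$ — the $f$-component must have linear part $z$ and no linear $w$-term, the $g$-component must have linear part $w$ — forces relations among the parameters: it links the dilations $s,s'$, pins down $c$ in terms of the remaining data, forces the translation parameter $a$ of $\gamma$ to vanish (because $H_1$ already has $\phi_z(0)=0$), and makes $U$ equal to $u'$ times a real orthogonal matrix $R$, with $P$ then determined by $R$. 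Imposing the remaining normalizations (vanishing of the quadratic part of $g$ and of $g_{w^2}(0)$) only constrains $r,r'$. One is then left with a residual group whose action on the distinguished coefficient is homogeneous: the coefficient $\tfrac{i}{2}w(zA)$ of $f$, equivalently the coefficient $zAz^t$ of $\phi$ which carries the same matrix, is sent to the corresponding coefficient with
\[
A'=c\,R^{t}AR,\qquad c\in\mathbb{R}\setminus\{0\},\qquad R\in\mathrm{O}(2),
\]
$c$ coming from the residual scaling and $R$ from the residual orthogonal factor relating $U$ and $P$. As $A\mapsto cR^{t}AR$ with $c\neq0$ and $R$ invertible is a rank-preserving linear map (and indeed preserves symmetry and trace-freeness, consistently with \eqref{e:normalform}), we get $\rk A'=\rk A$.

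The one point that deserves care is the absence of an inhomogeneous (additive) term in the transformation rule for $A$: a priori the second-order part of $\gamma$ applied to the linear part of $H_1$, or the linear part of $\gamma$ applied to the second-order part of $\psi^{-1}$, could add an extra symmetric $2\times2$ matrix to this coefficient. This does not occur. Assigning weight $1$ to $z$ and weight $2$ to $w$ — the weights compatible with $\Im w=z\overline{z^t}$ — all the formulas for $\Aut_0(\mathbb{H}^5)$ and $\Aut_0(\mathcal{X})$ in Section~\ref{sec:2} are filtered with respect to these weights, with $\delta=1+(\text{terms of weight}\ge2)$; hence the coefficient $\tfrac{i}{2}w(zA)$, of weight $3$, can only receive contributions of weight $3$, and a direct inspection shows the potentially dangerous cross terms are either of the wrong weight or get absorbed into the already-normalized lower-order data of $H_1$ (its linear part and the scalar $\lambda$), leaving exactly the homogeneous rule above. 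Conceptually, once one notes that $A_{\alpha,\beta}$ is symmetric and trace-free, so $\rk A_{\alpha,\beta}\in\{0,2\}$ with $\rk A_{\alpha,\beta}=0$ iff $A_{\alpha,\beta}=0$, the whole assertion amounts to saying that the obstruction to linearizing the $2$-jet of $H$ at $p$ — a manifestly equivalence-invariant quantity of the germ — vanishes or not independently of the chosen normalization. The main effort is thus the finite but somewhat tedious determination of the residual isotropy group from the two explicit stability-group parametrizations; once that is in hand, the rank statement is immediate.
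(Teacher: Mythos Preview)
Your proposal addresses only the ``Moreover'' clause of the proposition --- the rank invariance of $A_{\alpha,\beta}$ --- and takes the existence of the normal form \eqref{e:normalform} as established by the preceding Lemma together with the computations immediately following it. That is the correct reading of the proposition's role in the paper.

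Your approach to the rank invariance is correct and genuinely different from the paper's. You compute the residual isotropy directly: matching the linear part of $f$ and the vanishing of $\phi_z(0)$ forces $a=0$, $c=0$, $s'=s$, and $U=u'P$ with $P\in\mathrm{O}(2)$; then the reality of the $zA'z^t$ coefficient in $\phi$ forces $r'=0$, and the reality of the $\tfrac{i}{2}w(zA')$ coefficient in $f$ forces $r_0+r'=0$, hence $r_0=0$ as well. What remains gives exactly $A'=s^{-2}P^{t}AP$, the homogeneous rule you claim, and the rank conclusion is immediate. (Your remark that $A_{\alpha,\beta}$ is symmetric and trace-free, so that $\rk A_{\alpha,\beta}\in\{0,2\}$, reduces the whole question to whether $A=0$, which is visibly preserved.) This is also consistent with the explicit equivalence exhibited later in Subcase~2 of Section~\ref{sec:4}.

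The paper takes a different route: it introduces the tensor $\mathcal{A}'(H)$ in Definition~\ref{defn:21} and records (proof left to the reader) that $\rk A_{\alpha,\beta}=\rk\mathcal{A}'(H)(p)$; since $\mathcal{A}'(H)$ is defined from $H$ and the fixed defining functions without reference to any normalization, its rank is manifestly an invariant of the germ. The paper's argument buys a conceptual tensorial invariant that is reused later (e.g.\ in Theorems~\ref{thm:22} and \ref{thm:2.3}), at the cost of setting up extra machinery. Your argument is self-contained and elementary, needing only the stability-group parametrizations already written down in Section~\ref{sec:2}; its cost is that the ``finite but somewhat tedious'' determination of the residual isotropy must actually be carried out --- you should push the computation through to the explicit rule $A'=s^{-2}P^{t}AP$ rather than leave it as a claim supported by a weight heuristic.
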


\subsection{Geometric rank and the CR Ahlfors tensor}
By Proposition~\ref{prop:2.1}, the rank of the matrix \(A\) appearing in the partial normal form \eqref{e:normalform} is an invariant of the equivalence class of the germ \((H,p)\). Similarly to Huang \cite{Huang99} and Reiter--Son \cite{rs22}, we make the following definition.
\begin{definition}\label{defn:grank}
The rank of the matrix \(A\) is called the  \emph{geometric rank} of \(H\) at \(p\), and denoted by \(\rk(H)(p)\). 
\end{definition}

The notion of geometric rank is very useful in the study of sphere and hyperquadric maps, as exhibited in Huang \cite{Huang99} and Huang et al \cite{huang2020boundary}. Moreover, it is related to the rank of the Hermitian part of the CR Ahlfors tensor of sphere and hyperquadric maps, as shown in Lamel--Son \cite{lamel2019cr} and Reiter--Son \cite{rs24}. Motivated by these works, we introduce the following tensor.

\begin{definition}\label{defn:21} Let $M$ and \(M'\) be real hypersurfaces in $\mathbb{C}^{n+1}$ and \(\mathbb{C}^{N+1}\), defined by $\rho$ and \(\rho'\), respectively. Suppose that $H  \colon U \to \mathbb{C}^{N+1}$ is a holomorphic map such that \(H(U\cap M) \subset M'\). Assume that \(V \subset U\) is an open subset with \(V\cap M \ne \emptyset\) and \(Q \colon V \to \mathbb{R}\) is a positive real-valued function satisfying 
\begin{equation}\label{e:genme}
	\rho'(H(z),\overline{H(z)}) = Q(z,\bar{z})\, \rho(z,\bar{z}), \quad z\in V \subset U.
\end{equation}
Then we define a tensor \(\mathcal{A}'(H)\) associated to $H$ on \(V \cap M\) as follows:
\begin{equation}\label{gAhlfors}
	\mathcal{A}'(H)(Z, \overline{W}) = (i\partial \bar{\partial} \log Q)(Z,\overline{W}), \quad Z,W \in T^{(1,0)}M.
\end{equation}
\end{definition}
Observe that this tensor depends on the map \(H\) as well as the defining functions \(\rho\) and~\(\rho'\). The motivation for this definition comes from two sources. The first one is a recent study of the CR Ahlfors tensor and the second is the relation between the Hermitian part of the CR Ahlfors tensor and the geometric rank of sphere and hyperquadric maps.
\begin{proposition}[Lamel--Son \cite{lamel2019cr}]
Let \(M\) and \(M'\) be strictly pseudoconvex real hypersurfaces as in Definition~\ref{defn:21}. Consider the pseudo-Hermitian structures \(\theta = i\bar{\partial} \rho\) and \(\theta'= i\bar{\partial} \rho'\), respectively. 
Let \(\mathcal{A}(H)\) be the CR Ahlfors tensor of \(H\) with respect to this pair of structures. 
Assume that \(J(\rho) = 1 + O(\rho^2)\) and \(J(\rho') = 1 + O(\rho'^2)\).
Then
\[
\mathcal{A}(H) (Z_{\alpha}, Z_{\bba}) =  \mathcal{A}'(H) (Z_{\alpha}, Z_{\bba}).
\]
\end{proposition}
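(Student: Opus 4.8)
The plan is to compare the two tensors by computing both sides in terms of the defining functions and their associated Fefferman-type invariants, reducing everything to a pointwise identity along $M$.

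First I would recall the precise definition of the CR Ahlfors tensor $\mathcal{A}(H)$ associated to a pair of pseudo-Hermitian structures. Given $\theta = i\bar\partial\rho$ on $M$ and $\theta' = i\bar\partial\rho'$ on $M'$, the pullback $H^*\theta'$ is a (real, nonnegative) multiple of $\theta$ along $M$, say $H^*\theta' = |\Lambda|^2\,\theta$ for some CR function $\Lambda$ (this is the "conformal factor" of the map), and the CR Ahlfors tensor is essentially $\mathcal{A}(H) = i\partial\bar\partial\log(\text{something built from }\Lambda)$ restricted to $T^{(1,0)}M$ — more precisely it measures the failure of $H$ to be a pseudo-conformal (CR) transformation, and its Hermitian part is the $\partial\bar\partial$ of the logarithm of the ratio of the transverse curvatures / the conformal factors. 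The key point is to write both $\mathcal{A}(H)$ and $\mathcal{A}'(H)$ as $i\partial\bar\partial$ of an explicit function modulo terms that vanish on $T^{(1,0)}M$, and then show the two functions differ by something whose $\partial\bar\partial$ kills $T^{(1,0)}M$.

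Next I would exploit the hypothesis $J(\rho) = 1 + O(\rho^2)$ and $J(\rho') = 1 + O(\rho'^2)$, where $J$ denotes the Fefferman determinant (Monge–Ampère) operator. This normalization is precisely what makes $\rho$ and $\rho'$ behave like the "Fefferman defining functions," so that the ambient metric / the associated volume densities agree with those coming from the pseudo-Hermitian structures $\theta,\theta'$ up to order $O(\rho^2)$, resp. $O(\rho'^2)$. The strategy: from \eqref{e:genme} we have $\rho'\circ H = Q\cdot\rho$, and taking $J$ of both sides and using the transformation law of $J$ under composition with the holomorphic map $H$ together with the two normalization hypotheses, one identifies $Q$ along $M$ (to the relevant order) with exactly the conformal factor $|\Lambda|^2$ (or its square, up to a harmless constant) that defines $\mathcal{A}(H)$. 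Since $\mathcal{A}'(H) = i\partial\bar\partial\log Q$ restricted to $T^{(1,0)}M$ by definition \eqref{gAhlfors}, and $\mathcal{A}(H)$ restricted to $T^{(1,0)}M$ equals $i\partial\bar\partial\log(\text{conformal factor})$ restricted to the same bundle, the identity $\mathcal{A}(H)(Z_\alpha,Z_{\bar\beta}) = \mathcal{A}'(H)(Z_\alpha,Z_{\bar\beta})$ follows. One has to be careful that $Q$ is only defined up to multiplication by a function vanishing on $M$ (since $\rho$ is only a defining function), but $i\partial\bar\partial\log$ of such an ambiguity, restricted to $T^{(1,0)}M$, vanishes — this is the standard Fefferman-type lemma that $\partial\bar\partial(\rho \cdot h)$ restricted to the holomorphic tangent space of $M$ vanishes along $M$, which is exactly why the tensor is well-defined.

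The main obstacle I anticipate is bookkeeping the orders of vanishing: one must verify that the $O(\rho^2)$ and $O(\rho'^2)$ errors in the normalization of $J$ do not contribute to $\mathcal{A}$ and $\mathcal{A}'$ when restricted to $T^{(1,0)}M$, i.e.\ that $i\partial\bar\partial$ of an $O(\rho^2)$ term, evaluated on $(Z_\alpha, Z_{\bar\beta})$ along $M$, is zero. This is true because each derivative hitting $\rho^2$ leaves at least one factor of $\rho$ unless both a $\partial$ and a $\bar\partial$ hit the two factors separately, and in that remaining term the surviving $\partial\rho\wedge\bar\partial\rho$ annihilates $T^{(1,0)}M\times T^{(1,0)}M$ (since $Z_\alpha\lrcorner\,\partial\rho = 0$ on $M$); but making this fully rigorous requires carefully expanding $J(Q\rho)$ via the cofactor/Monge–Ampère expansion and tracking which terms are $O(\rho)$. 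The rest — identifying the conformal factor and invoking the well-definedness of the tensor — is routine once the definitions are unwound.
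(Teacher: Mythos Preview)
The paper does not prove this proposition at all: it is quoted as a result of Lamel--Son \cite{lamel2019cr}, and immediately after the statement the authors write that ``the construction of the CR Ahlfors tensor is lengthy'' and refer the reader to \cite{lamel2019cr} for the details. So there is no ``paper's own proof'' to compare your proposal against.

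As for your sketch itself, the overall shape is plausible --- identify $Q$ along $M$ with the conformal factor arising from $H^*\theta' = Q\,\theta$ (which is immediate from $\rho'\circ H = Q\rho$ by applying $i\bar\partial$ and restricting to $M$), and then argue that the Hermitian part of $\mathcal{A}(H)$ is $i\partial\bar\partial\log Q$ on $T^{(1,0)}M$. But the real content lies in the second step, and here your proposal is too vague to count as a proof: the CR Ahlfors tensor of \cite{lamel2019cr} is defined via Tanaka--Webster connection data (Schwarzian-type expressions in the second fundamental form, curvatures, and covariant derivatives of $\log Q$), not simply as $i\partial\bar\partial$ of a conformal factor, and showing that all the curvature and connection terms cancel down to $(\log Q)_{,\alpha\bar\beta}$ is exactly where the hypotheses $J(\rho)=1+O(\rho^2)$, $J(\rho')=1+O(\rho'^2)$ enter in a nontrivial way. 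Your ``cofactor/Monge--Amp\`ere expansion'' comment gestures at this, but the actual identities relating the Fefferman normalization to the vanishing of the relevant Tanaka--Webster scalar curvature and torsion terms are the substance of the argument in \cite{lamel2019cr}, and you have not supplied them.
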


The construction of the CR Ahlfors tensor is lengthy. We refer the readers to \cite{lamel2019cr} for the details as we mainly work with \(\mathcal{A}'(H)\), which is defined also in the case of Levi-degenerate hypersurfaces. On the other hand, this proposition suggests that the CR Ahlfors derivative is interesting only when we can choose ``nice'' pseudohermitian structures, which often come from a special defining functions of the hypersurfaces.

Fix a local frame \(Z_{\alpha}\) in a neighborhood of \(p\), we obtain a Hermitian \(n\times n\)-matrix \((\mathcal{A}(H)(Z_{\alpha}, Z_{\bba}))\), here \(n\) is the CR dimension of the source. It is clear that the rank of the matrix on the left does not depend on the chosen frame. Moreover, as already observed by Lamel and Son in the case of sphere maps (see also Reiter--Son \cite{rs22,rs24}), we have the following lemma whose proof is left to the readers.
\begin{lemma} Let \(U\) be an open neighborhood of a point \(p\) in \(\mathbb{C}^3\), \(p\in \mathbb{H}^5\), and let \(H \colon U \to \mathbb{C}^4\) be a holomorphic map such that \(H(U \cap \mathbb{H}^5) \subset \mathcal{X}\). Then \(\rk(H)(p) = \rk \mathcal{A}'(H)(p)\). 
\end{lemma}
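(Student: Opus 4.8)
The plan is to compute both invariants after bringing $H$ into the partial normal form of Proposition~\ref{prop:2.1}. First I would observe that neither $\rk(H)(p)$ nor $\rk\mathcal{A}'(H)(p)$ changes when $H$ is replaced by $\gamma\circ H\circ\psi^{-1}$ with $\psi\in\Aut_0(\mathbb{H}^5)$ and $\gamma\in\Aut_0(\mathcal{X})$. For $\rk(H)(p)$ this is the last assertion of Proposition~\ref{prop:2.1}. For $\rk\mathcal{A}'(H)(p)$ it follows from the rescaling law of the defining functions: a direct computation with the parametrizations in Section~\ref{sec:2} (or the fact that these maps are boundary restrictions of automorphisms of $\tfour$) shows that $\rho\circ\psi=|h_\psi|^2\rho$ and $\rho'\circ\gamma=|h_\gamma|^2\rho'$ for holomorphic, nowhere vanishing functions $h_\psi,h_\gamma$ near the relevant points --- the usual fractional-linear denominators. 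Hence the positive factor $Q$ in \eqref{e:genme} transforms into $\widetilde Q=|h|^2\,(Q\circ\psi^{-1})$ for a local holomorphic nowhere vanishing $h$, so $\log\widetilde Q-\log(Q\circ\psi^{-1})$ is pluriharmonic and is annihilated by $i\partial\bar\partial$; since in addition $d\psi^{-1}$ maps $T^{(1,0)}\mathbb{H}^5$ isomorphically onto itself, the Hermitian form $\mathcal{A}'(\cdot)$ restricted to the Levi distribution is changed only by conjugation with an invertible matrix, so its rank is preserved. We may therefore take $p=0$, assume that $H$ is CR transversal at $0$ (implicit in the statement, since both invariants presuppose the positive factor $Q$ and hence transversality), and assume that $H=(f,\phi,g)$ is in the normal form \eqref{e:normalform}; then $\rk(H)(0)=\rk A_{\alpha,\beta}$.

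Next I would evaluate $\mathcal{A}'(H)$ at the origin, using the CR frame $L_m=\partial_{z_m}+2i\bar z_m\partial_w$, $m=1,2$, of $T^{(1,0)}\mathbb{H}^5$ (so that $L_m(0)=\partial_{z_m}$), together with the defining functions $\rho=\Im w-z\overline{z^t}$ and $\rho'=(1-|\zeta|^2)\Im w-z\overline{z^t}-\Re(\bar\zeta zz^t)$, for which $\rho'(H)=(1-|\phi|^2)\Im g-f\overline{f^t}-\Re(\bar\phi\,ff^t)$. Substituting \eqref{e:normalform} one checks that $\rho'(H)$ agrees with $\rho$ up to terms of vanishing order $\ge3$ at $0$. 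Writing $\rho'(H)=Q\rho$ with $Q$ real-analytic and positive, this forces $Q=1+R$ with $R$ of order $\ge2$, so $\log Q=R_2+O(3)$ where $R_j$ denotes the degree-$j$ part of $R$. Since $i\partial\bar\partial\log Q$ is a genuine $2$-form, its value on $(L_\alpha,\overline{L_\beta})$ at $0$ depends only on $L_\alpha,L_\beta$ at $0$, so $\mathcal{A}'(H)(L_\alpha,\overline{L_\beta})(0)=i\,(\partial_{z_\alpha}\partial_{\bar z_\beta}\log Q)(0)$, i.e. $i$ times the coefficient of $z_\alpha\bar z_\beta$ in $R_2$. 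Finally, from $\rho'(H)-\rho=\rho R$ the degree-$3$ part of $\rho'(H)-\rho$ equals $(\Im w)\,R_2$, so the coefficient of $z_\alpha\bar z_\beta$ in $R_2$ is $2i$ times the coefficient of the monomial $z_\alpha\bar z_\beta w$ in the degree-$3$ part of $\rho'(H)$. Inspecting that degree-$3$ part term by term in the normal form, the contributions of $(1-|\phi|^2)\Im g$ (the cubic part of $g$ together with monomials of type $w^a\bar w^b$) and of $\Re(\bar\phi\,ff^t)$ (monomials of type $\bar w z_jz_k$ and $w\bar z_j\bar z_k$) contain no monomial $z_\alpha\bar z_\beta w$; these arise only from $-f\overline{f^t}$, whose degree-$3$ part is $(\Im w)\,zA_{\alpha,\beta}\overline{z^t}-w^2\nu\overline{z^t}-\bar w^2\bar\nu z^t$. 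Reading off the coefficient of $z_\alpha\bar z_\beta w$ from $(\Im w)\,zA_{\alpha,\beta}\overline{z^t}$ yields the $(\alpha,\beta)$ entry of $A_{\alpha,\beta}$. Hence the matrix $\bigl(\mathcal{A}'(H)(L_\alpha,\overline{L_\beta})(0)\bigr)_{\alpha,\beta=1,2}$ equals $i\,A_{\alpha,\beta}$, so $\rk\mathcal{A}'(H)(0)=\rk A_{\alpha,\beta}=\rk(H)(0)$.

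I expect the only genuinely delicate point to be the rescaling law $\rho'\circ\gamma=|h_\gamma|^2\rho'$ for the target automorphisms, which legitimizes the reduction to the normal form; this is a short but slightly fiddly verification from the explicit formulas for $\gamma=(\gamma_1,\dots,\gamma_4)$ in Section~\ref{sec:2} (or is immediate once $\gamma$ is recognized as a boundary automorphism of $\tfour$). The remaining steps --- substituting \eqref{e:normalform} into $\rho'$ and tracking which degree-$3$ monomials can occur --- are routine.
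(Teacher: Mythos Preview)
Your argument is correct, and in fact the paper provides no proof of this lemma at all (it is explicitly ``left to the readers''), so there is nothing to compare your route against. Your two-step strategy --- first reducing to the partial normal form via the rescaling law \(\rho\circ\psi=|h_\psi|^2\rho\), \(\rho'\circ\gamma=|h_\gamma|^2\rho'\), then reading off the coefficient of \(z_\alpha\bar z_\beta\) in \(R_2\) from the degree-\(3\) part of \(\rho'(H)\) --- is exactly the natural approach suggested by the references to Lamel--Son and Reiter--Son in the surrounding text, and the monomial bookkeeping you carry out is accurate: the only source of a \(z_\alpha\bar z_\beta w\) term at degree three is \(-f\overline{f^t}\), giving the \((\alpha,\beta)\) entry of \(A_{\alpha,\beta}\) up to a nonzero scalar. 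One cosmetic remark: the resulting matrix \(iA_{\alpha,\beta}\) is skew-Hermitian rather than Hermitian (the paper's phrasing ``Hermitian \(n\times n\)-matrix'' is slightly loose here), but this is immaterial for the rank and hence for the lemma.
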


This lemma provides a simple argument of the invariant property of the geometric rank of \(H\).
\subsection{Isometric embeddings}
Assume that \(H\) is defined  and holomorphic in a neighborbood \(U\) of a point \(p \in M\), transversal to \(M'\) at \(H(p)\) and sends \(M\) into \(M'\). Also assume that the relation \eqref{e:genme} is valid in \(U\). In this situation, the tensor \(\mathcal{A}'(H)\) is closely related to the isometry property with respect to suitable K\"ahler metrics on one-sided neighborhoods of \(M\) and \(M'\). For example, let the sphere \(\mathbb{S}^{2n+1}\) be defined by \(\rho(z,\zba)=|z|^2 -1 = 0\) and let \(\mathcal{R}\) be the smooth boundary part of a type IV domain, defined by \(\rho'(Z,\Zba) = 1 - 2 |Z|^2+|Z Z^t|^2 = 0\).
A holomorphic map \(H\) defined in an open neighborhood \(U\) of a point \(p\in \mathbb{S}^{2n+1}\) and sends \(\mathbb{S}^{2n+1}\) into \(\mathcal{R}\) must satisfy
\begin{equation}\label{e:me2}
	\rho' \circ H = Q\rho,
\end{equation}
for some real function \(Q\). If \(H\) is transversal to \(\mathcal{R}\), then \(Q\) is positive along \(U\cap \mathbb{S}^{2n+1}\). If \(i\partial \bar{\partial} \log \rho\) and \(i\partial \bar{\partial} \log \rho'\) define K\"ahler metrics on open sets \(U \cap \{\rho>0\}\) and \(V\cap \{\rho'>0\} \supset H(U)\) respectively, then the isometry property 
of \(H\) is equivalent to the pluriharmonicity of the function \(u = \log (Q)\). In fact, \eqref{e:me2} implies that
\begin{equation}\label{e:iso}
H^{\ast}\left(i\partial \bar{\partial} \log \rho'\right) = i\partial \bar{\partial} \log \rho + i\partial \bar{\partial} \log Q.
 \end{equation}
Thus, \(H\) is an isometry if and only if \(i\partial \bar{\partial} \log Q = 0\), which in turn is equivalent to the pluriharmonicity of \(\log Q\). 
Therefore, the isometry property of \(H\) implies the vanishing of the tensor \(\mathcal{A}'(H)\) on \(U \cap M\). This simple observation is particularly interesting when considering classical domains when the boundary defining function is related to the Bergman kernels of the domains. More explicitly, the Bergman kernel of the classical symmetric domain of type IV is 
\[ 
K_{D^{\mathrm{IV}}_m}(Z,Z') = \frac{1}{V(D^{\mathrm{IV}}_m)} \left(1-2\overline{Z} Z' + |ZZ'|^2\right)^{-m},
\]
therefore, the K\"ahler form of the Bergman metric on this domain is given by
\[ 
\omega_{D^{\mathrm{IV}}_m} = i\partial\bar{\partial} \log K_{D^{\mathrm{IV}}_m}(Z,Z)
=
-m(i\partial\bar{\partial} \log \rho'(Z,\Zba))
\]
Likewise, the K\"ahler form for the Bergman metric on the ball is
\[ 
\omega_{\mathbb{B}^{n+1}} = i\partial\bar{\partial} \log K_{\mathbb{B}^{n+1}}(Z,Z)
=
-(n+2)i\partial\bar{\partial} \log \rho(z,\zba).
\]
Thus, the restriction of \(H\) to \(U\cap \mathbb{B}^{n+1}\) is an isometric embedding (up to a normalizing constant), which means that $H$ satisfies the following equation 
\[ 
\frac{m}{n+2}\omega_{\mathbb{B}^{n+1}} = H^{\ast} \omega_{D^{\mathrm{IV}}_m},
\]
which holds if and only if \(H\) is a local embedding and \(Q\) is pluriharmonic on \(U\cap \mathbb{B}^{n+1}\). By continuity, \(\mathcal{A}'(H) = 0\) on \(U\cap \mathbb{S}^{2n+1}\), i.e., \(H\) has vanishing geometric rank on \(U\cap \mathbb{S}^{2n+1}\). The converse also holds. In fact, we have the following version of Huang--Lu--Tang--Xiao \cite{huang2020boundary} boundary characterization of isometric embeddings.
\begin{theorem}\label{thm:22} Let \(U\) be an open neigborhood of a point \(p \in \mathbb{S}^{2n+1}\) and \(H\) a holomorphic map from \(U\) into \(\mathbb{C}^m\). Assume that \(U \cap \mathbb{B}^{n+1}\) is connected, \(H(U \cap \mathbb{B}^{n+1}) \subset \tfourM\), and \(H(U \cap \mathbb{S}^{2n+1}) \subset \mathcal{R}\). Then the following are equivalent:
\begin{enumerate}
	\item \(H\) is transversal at \(p\) and \(\mathcal{A}(H) = 0\) on an open neigborhood of \(p\) in \(\mathbb{S}^{2n+1}\).
	\item \(H\) is an isometric embedding from \(U \cap \mathbb{B}^{n+1}\) into \(\tfourM\).
\end{enumerate}
\end{theorem}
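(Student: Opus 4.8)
The plan is to establish the two implications separately, with the bulk of the work in $(1)\Rightarrow(2)$. Both directions rest on rewriting \eqref{e:iso}: with $\rho$, $\rho'$ the normalized (Bergman‑kernel) defining functions of $\mathbb{S}^{2n+1}$ and $\mr$ and $Q$ the factor in $\rho'\circ H=Q\rho$, the relations $i\partial\bar\partial\log\rho=-\tfrac1{n+2}\,\omega_{\mathbb{B}^{n+1}}$ and $i\partial\bar\partial\log\rho'=-\tfrac1m\,\omega_{\tfourM}$ turn \eqref{e:iso} into
\[
i\partial\bar\partial\log Q\;=\;\tfrac1{n+2}\,\omega_{\mathbb{B}^{n+1}}\;-\;\tfrac1m\,H^{\ast}\omega_{\tfourM}\qquad\text{on }U\cap\mathbb{B}^{n+1}.
\]
Hence $H|_{U\cap\mathbb{B}^{n+1}}$ is an isometric immersion (up to the constant $\tfrac{m}{n+2}$) precisely when $\log Q$ is pluriharmonic on $U\cap\mathbb{B}^{n+1}$, and after shrinking $U$ such an immersion is injective, hence an isometric embedding. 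On the boundary side I will use $\mathcal{A}(H)=\mathcal{A}'(H)$ (the Lamel–Son comparison \cite{lamel2019cr}, whose hypotheses $J(\rho)=1+O(\rho^2)$, $J(\rho')=1+O(\rho'^2)$ hold for the canonical defining functions) together with $\mathcal{A}'(H)=\bigl(i\partial\bar\partial\log Q\bigr)\big|_{T^{(1,0)}\mathbb{S}^{2n+1}}$ from Definition~\ref{defn:21}. Thus the theorem amounts to: pluriharmonicity of $\log Q$ on $U\cap\mathbb{B}^{n+1}$ is equivalent to CR transversality at $p$ together with $\bigl(i\partial\bar\partial\log Q\bigr)|_{T^{(1,0)}\mathbb{S}^{2n+1}}=0$ near $p$.

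For $(2)\Rightarrow(1)$ I would first note that an isometric embedding must be CR transversal at $p$: if not, then $Q=(\rho'\circ H)/\rho$ extends real‑analytically across $\mathbb{S}^{2n+1}$ near $p$ and vanishes on $\mathbb{S}^{2n+1}$, while pluriharmonicity of $\log Q$ on the (simply connected, after shrinking) set $U\cap\mathbb{B}^{n+1}$ gives $Q=|e^{\Phi}|$ for a holomorphic $\Phi$ with $e^{\Phi}$ bounded and nowhere zero; then $e^{\Phi}$ has nontangential boundary value $0$ on the boundary piece $U\cap\mathbb{S}^{2n+1}$, so $e^{\Phi}\equiv0$ by the Privalov boundary uniqueness theorem, a contradiction. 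Granting transversality, $Q$ is real‑analytic and nonvanishing on all of $U$, and the displayed identity with $H^{\ast}\omega_{\tfourM}=\tfrac{m}{n+2}\omega_{\mathbb{B}^{n+1}}$ gives $i\partial\bar\partial\log Q=0$ on the open set $U\cap\mathbb{B}^{n+1}$, hence, by real‑analyticity, on all of $U$; restricting to $\mathbb{S}^{2n+1}$ gives $\mathcal{A}'(H)=0$, hence $\mathcal{A}(H)=0$, on a neighborhood of $p$.

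For $(1)\Rightarrow(2)$, transversality at $p$ again makes $Q$ real‑analytic and nonvanishing near $p$, and the hypothesis $\mathcal{A}(H)=0$ on a neighborhood $M_0$ of $p$ becomes $\bigl(i\partial\bar\partial\log Q\bigr)\big|_{T^{(1,0)}\mathbb{S}^{2n+1}}=0$ on $M_0$. The remaining and essential step — the one I expect to be the main obstacle — is to upgrade this to $i\partial\bar\partial\log Q=0$ on $U\cap\mathbb{B}^{n+1}$; the difficulty is genuine, since vanishing of $i\partial\bar\partial\log Q$ on the CR directions along $M_0$ alone does not force interior pluriharmonicity. I would proceed as follows. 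Since $\mathbb{S}^{2n+1}$ is strictly pseudoconvex, the vanishing of $i\partial\bar\partial\log Q$ on the CR directions along $M_0$ says $\log Q|_{M_0}$ is CR‑pluriharmonic, so (shrinking $U$) it is the boundary value of a pluriharmonic function $v$ near $M_0$; then $\log Q-v=\rho g$ for a real‑analytic $g$, and computing the Levi trace of $i\partial\bar\partial(\rho g)$ along $M_0$ yields $g\cdot(\text{Levi form})$, which must vanish because $i\partial\bar\partial(\log Q-v)$ vanishes on the CR directions (by the hypothesis and $v$ pluriharmonic) and the Levi form of $\mathbb{S}^{2n+1}$ is nondegenerate, whence $g=0$ on $M_0$ and $\log Q-v=O(\rho^2)$. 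To push $O(\rho^2)$ down to $0$ I would invoke the rigidity of the model source together with the geometry of $\tfourM$, along one of two routes: (a) the vanishing of $\mathcal{A}(H)$ on $M_0$ forces the geometric rank of $H$ to vanish at every point of $M_0$ — i.e.\ the matrix $A$ in the higher‑dimensional analogue of the partial normal form of Proposition~\ref{prop:2.1} vanishes there — and inserting $A\equiv0$ into the mapping equation \eqref{e:me} and solving order by order should show that the full Taylor expansion of $Q$ at $p$ is that of a pluriharmonic function, so $i\partial\bar\partial\log Q\equiv0$ by analyticity; or (b) argue directly from the displayed identity, where $i\partial\bar\partial\log Q=\tfrac1{n+2}\omega_{\mathbb{B}^{n+1}}-\tfrac1m H^{\ast}\omega_{\tfourM}\le\tfrac1{n+2}\omega_{\mathbb{B}^{n+1}}$ since $H^{\ast}\omega_{\tfourM}\ge0$, and a Schwarz‑type estimate at the boundary in the spirit of Huang–Lu–Tang–Xiao \cite{huang2020boundary}, using the holomorphic sectional curvature pinching of $\tfourM$ and the sphericity of the source, should give the reverse inequality, hence equality. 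Once $\log Q$ is pluriharmonic on $U\cap\mathbb{B}^{n+1}$, the displayed identity yields $H^{\ast}\omega_{\tfourM}=\tfrac{m}{n+2}\omega_{\mathbb{B}^{n+1}}$, so $H|_{U\cap\mathbb{B}^{n+1}}$ is an isometric immersion, and an isometric embedding after shrinking $U$.
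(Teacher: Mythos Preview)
Your framework---reducing the equivalence to pluriharmonicity of $\log Q$ via \eqref{e:iso}---matches the paper's setup, and your treatment of $(2)\Rightarrow(1)$ is fine (indeed more detailed than what the paper sketches). The difference, and the gap, is in $(1)\Rightarrow(2)$.

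The paper does not attempt to prove pluriharmonicity of $\log Q$ directly. Instead it writes down an explicit quadratic holomorphic map $\Psi$ from a neighborhood of $\mathcal{R}$ (resp.\ $\mathcal{X}$) into $\mathbb{C}^{m+2}$ sending $\mathcal{R}$ into an indefinite hyperquadric $\mathbb{H}^{2m+3}_1$, with the algebraic identity $\widetilde\rho\circ\Psi=\rho'$ (where $\widetilde\rho$ is the standard defining function of the hyperquadric). This forces $\widetilde\rho\circ(\Psi\circ H)=Q\rho$ with the \emph{same} $Q$, so $\mathcal{A}'(\Psi\circ H)=\mathcal{A}'(H)=0$; now the Huang--Lu--Tang--Xiao theorem \cite{huang2020boundary} applies as a black box to the hyperquadric map $\Psi\circ H$, giving that $\Psi\circ H$ extends as a local isometry, hence so does $H$ (since $\widetilde\rho\circ\Psi=\rho'$ makes $\Psi$ itself an isometry between the one-sided K\"ahler metrics). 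This reduction replaces everything after your $O(\rho^2)$ step.

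Your routes (a) and (b) are both incomplete as written. Route (b) amounts to reproving \cite{huang2020boundary} in the type~IV setting rather than invoking it; the ``Schwarz-type estimate'' you allude to is the actual content of that paper and is not short. Route (a) is more problematic: vanishing of the normal-form matrix $A$ at every point of $M_0$ gives pointwise second-order information, but your claim that this forces the full Taylor expansion of $Q$ to be pluriharmonic requires controlling all higher jets of $H$ from $A\equiv0$, and you have not indicated a mechanism---in Section~\ref{sec:4} that kind of jet determination is carried out only for $n=2$, $m=4$, and even there it takes several pages and uses the specific low dimension. The embedding $\Psi$ into a hyperquadric is the key idea you are missing.
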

In Theorem~\ref{thm:2.3} below, we also have a similar characterization of isometric embeddings for holomorphic maps sending a piece of a Heisenberg hypersurface into the model \(\mathcal{X}\). The proofs of these two theorems are essentially the same. We thus omit the proof of Theorem~\ref{thm:22}. We shall use this result in the proof of Theorem~\ref{main}.

The Siegel domain $\Omega$ is the domain in \(\mathbb{C}^3\) defined by \(\rho > 0\),
where
\[ 
\rho = \Im w - z \overline{z}^t,
\]
equipped with  the K\"ahler metric
\[ 
\omega_{\Omega} = i\partial\bar{\partial} \log(\rho(z,\zba).
\]
The domain \(\Omega\) is an unbounded model for the complex hyperbolic space with boundary \(\mathbb{H}^{2n+1}\). Similarly, the open set \(\mathcal{S} \subset \mathbb{C}^m\), \(m\geq 2\), defined by \(\rho' > 0\), where
\[ 
\rho' = (1-|\zeta|^2)\Im w  - Z \overline{Z}^t -\Re (\zeta Z Z^t),
\]
is a one-sided neighborhood of \(\mathcal{X}\). On \(\mathcal{S}\), we consider the K\"ahler metric with the K\"ahler form
\[
\omega_{\mathcal{S}} = i\partial\bar{\partial} \log(\rho'(Z,\zeta,w, \Zba,\bar{\zeta},\wba)).
\]
We have
\[ 
	\det\begin{pmatrix} \rho_{j\kba}
	\end{pmatrix}
	=	\frac{1}{4} \rho'^{-m},
 \]
which implies that
\[ 
	i\partial\bar{\partial} \log \left(\det\begin{pmatrix} \rho_{j\kba}
	\end{pmatrix}\right) = - m\, \omega_S.
 \]
That is, \(\omega_S\) is a K\"ahler--Einstein metric with scalar curvature \(R = -m^2\).

It is immediate to check that the maps \(\ell\) and \(\iota\) extend to local holomorphic embeddings from the Siegel domain \(\Omega\) into \(\mathcal{S}\). Indeed, computations yield
\[ 
	\rho' \circ \ell = \rho,
 \]
which implies that \(\ell\) is an isometric embedding, while the identity
\[ 
	\rho' \circ \iota = \left|\frac{2}{1+\sqrt{1-4w^2-4i z z^t}}\right| \rho
 \]
implies that \(\iota\) is also a local isometric embedding. Here, we use the simple fact that the logarithm of the modulus of a nonvanishing holomorphic function is pluriharmonic.

Thus, the tensor \(\mathcal{A}(\ell) = \mathcal{A}(\iota) = 0\) on \(U\cap \mathbb{H}^{2n+1}\). Similar to the theorem above, we have  \begin{theorem}\label{thm:2.3}
Let \(U\) be an open neigborhood of a point \(p \in \mathbb{H}^{2n+1}\) and \(H\) a holomorphic map from \(U\) into \(\mathbb{C}^m\). Assume that \(U \cap \Omega\) is connected, \(H(U \cap \Omega) \subset \mathcal{S}\), and \(H(U \cap \mathbb{H}^{2n+1}) \subset \mathcal{X}\). Then the following are equivalent:
\begin{enumerate}
	\item \(H\) is transversal at \(p\) and \(\mathcal{A}(H) = 0\) on an open neigborhood of \(p\) in \(\mathbb{H}^{2n+1}\).
	\item \(H\) is an isometric embedding from \(U \cap \Omega^{n+1}\) into \(\mathcal{S}\).
\end{enumerate}
\end{theorem}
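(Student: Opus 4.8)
The plan is to argue exactly as for Theorem~\ref{thm:22}, with the unit ball and the smooth boundary piece $\mr$ of $\tfourM$ replaced by the Siegel domain $\Omega$ and the hypersurface $\lc$ (equivalently, its one-sided neighbourhood $\mathcal S$); what makes this possible is that $\rho=\Im w-z\overline z^t$ and $\rho'=(1-|\zeta|^2)\Im w-Z\overline Z^t-\Re(\zeta ZZ^t)$ are exactly the Fefferman/Kähler--Einstein defining functions entering \eqref{e:iso} and the determinant identities recorded above. Since $H$ is holomorphic near $p$ and $\rho,\rho'$ are real-analytic, the function $Q$ with $\rho'\circ H=Q\rho$ is real-analytic near $p$ and positive on $U\cap\Omega$, and CR transversality of $H$ at $p$ is precisely the condition $Q(p)>0$; assuming it, $u:=\log Q$ is real-analytic on $U$ after shrinking, \eqref{e:iso} reads $i\partial\bar\partial u=H^*\omega_{\mathcal S}-\omega_\Omega$ on $U\cap\Omega$, and by the proposition of Lamel--Son \cite{lamel2019cr} the tensor $\mathcal A(H)=\mathcal A'(H)$ is the restriction of $i\partial\bar\partial u$ to $T^{(1,0)}\mathbb H^{2n+1}\times\overline{T^{(1,0)}\mathbb H^{2n+1}}$. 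Thus $H$ is an isometric embedding of $U\cap\Omega$ into $\mathcal S$ if and only if $u$ is pluriharmonic on $U\cap\Omega$: if it is, then $H^*\omega_{\mathcal S}=\omega_\Omega$ is nondegenerate, so $H$ is an immersion and hence, after shrinking $U$, an embedding; conversely if $H$ is such an embedding then $H^*\omega_{\mathcal S}=\omega_\Omega$ and \eqref{e:iso} forces $i\partial\bar\partial u=0$. In particular $(2)\Rightarrow(1)$ is immediate: pluriharmonicity of $u$ gives $\mathcal A'(H)=0$ near $p$, while CR transversality at $p$ is either part of the notion of isometric embedding here or follows readily, since a non-transversal holomorphic map cannot pull the complete metric of $\mathcal S$ back to that of $\Omega$ near the boundary.

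For $(1)\Rightarrow(2)$ the content is to upgrade the vanishing of the tangential part of $i\partial\bar\partial u$ on an open piece $V\subset\mathbb H^{2n+1}$ to full pluriharmonicity of $u$ on $U\cap\Omega$. First I would bring $H$ to the partial normal form of Proposition~\ref{prop:2.1} — the normalization there goes through verbatim in arbitrary dimension — and note that, by the lemma identifying $\rk\mathcal A'(H)$ with the geometric rank of Definition~\ref{defn:grank} (again valid in any dimension), the hypothesis is equivalent to $\rk(H)\equiv 0$ on $V$. The decisive step is then to show that a CR transversal map of vanishing geometric rank on an open set is isometric. Here, following Huang--Lu--Tang--Xiao \cite{huang2020boundary}, I would exploit the two structures not yet used: first, that $u=\log(\rho'\circ H)-\log\rho$ with $H$ holomorphic, so $i\partial\bar\partial u$ is coupled to the semipositive form $H^*\omega_{\mathcal S}$; and second, the Kähler--Einstein identities for $\omega_\Omega$ and $\omega_{\mathcal S}$ recorded above, together with the fact that $\rho$ and $\rho'$ solve the relevant complex Monge--Ampère equations and the associated metrics are complete. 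Concretely: differentiating the mapping equation $\rho'\circ H=Q\rho$ tangentially along $\mathbb H^{2n+1}$ converts the tangential vanishing of $i\partial\bar\partial u$ into the vanishing of a larger family of boundary jets of $u$; pulling the Monge--Ampère equation for $\omega_{\mathcal S}$ back by $H$ supplies, at each order, the one further relation that involves the transversal direction; and real-analyticity of $u$ then closes an induction showing that all derivatives of $i\partial\bar\partial u$ vanish along $V$, so $i\partial\bar\partial u\equiv 0$ near $p$. (Alternatively one can run the Chern--Lu/Bochner argument for $H\colon(\Omega,\omega_\Omega)\to(\mathcal S,\omega_{\mathcal S})$, using the Ricci lower bound for $\omega_\Omega$, the nonpositivity of the holomorphic bisectional curvature of $\omega_{\mathcal S}$, the boundary value $\operatorname{tr}_{\omega_\Omega}(H^*\omega_{\mathcal S})\to n+1$ forced along $V$ by the tangential vanishing, and the barrier $\pm\log\rho$, to deduce $H^*\omega_{\mathcal S}=\omega_\Omega$ directly; this is essentially the route used for Theorem~\ref{thm:22}.) Either way $u$ is pluriharmonic, and the first paragraph gives the conclusion.

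The step I expect to be the main obstacle is precisely this last one: the hypothesis is purely tangential, whereas pluriharmonicity is a condition in every complex direction, so one must manufacture control of the transversal component of $i\partial\bar\partial u$. This is exactly where holomorphy of $H$, the special Kähler--Einstein defining functions $\rho,\rho'$, and the geometric-rank normalization all have to be used together, and where the bookkeeping of the induction — or, in the analytic route, the choice of barrier and the handling of the boundary values — is delicate, as in \cite{huang2020boundary}. Once $\log Q$ is pluriharmonic the remainder is formal, and since the whole argument used only $\rho$, $\rho'$, the determinant identities and Proposition~\ref{prop:2.1}, it proves Theorem~\ref{thm:22} along the way.
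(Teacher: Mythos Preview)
Your treatment of the easy direction $(2)\Rightarrow(1)$ matches the paper: isometry forces $\log Q$ pluriharmonic on $U\cap\Omega$, hence $\mathcal A'(H)=0$ on the boundary by continuity, and transversality follows from positivity of $Q$.

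For the hard direction $(1)\Rightarrow(2)$, however, the paper does something quite different and much shorter than either of your proposed routes. Rather than reproving the Huang--Lu--Tang--Xiao argument from scratch in the setting of $\mathcal X$ (via an induction on boundary jets, or via a Chern--Lu/Bochner inequality with barrier), the paper \emph{reduces} to their theorem by an explicit quadratic embedding. Namely, the map
\[
\Psi(Z,\zeta,w)=\Bigl(Z,\ \tfrac12(w\zeta+iZZ^t+i\zeta),\ \tfrac12(w\zeta+iZZ^t-i\zeta),\ w\Bigr)
\]
sends $\mathcal X\subset\mathbb C^{m+1}$ into the indefinite hyperquadric $\mathbb H^{2m+3}_1\subset\mathbb C^{m+2}$ and satisfies $\widetilde\rho\circ\Psi=\rho'$ exactly. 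Hence $\widetilde\rho\circ(\Psi\circ H)=Q\rho$ with the \emph{same} $Q$, so $\mathcal A'(\Psi\circ H)=\mathcal A'(H)=0$. Now $\Psi\circ H$ is a transversal CR map between hyperquadrics with vanishing (Hermitian part of the) CR Ahlfors tensor, and the result of \cite{huang2020boundary} applies directly to give that $\Psi\circ H$ extends as a local isometry; since $\widetilde\rho\circ\Psi=\rho'$, this forces $H$ itself to be a local isometry into $\mathcal S$.

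So your plan is not wrong in spirit---the inductive or Chern--Lu arguments you sketch are precisely what underlies \cite{huang2020boundary}---but you are proposing to redo that machinery in a new target, whereas the paper sidesteps all of it with the single observation that $\mathcal X$ sits isometrically (at the level of defining functions) inside a hyperquadric. The trade-off is clear: your approach would be self-contained and might generalize to targets that do not admit such an embedding, but here it is unnecessary; the paper's approach is a two-line reduction once $\Psi$ is written down. Note also that your parenthetical ``this is essentially the route used for Theorem~\ref{thm:22}'' is not what the paper does---it states that the proofs of Theorems~\ref{thm:22} and~\ref{thm:2.3} are essentially the same, meaning both go through the embedding trick and the black-box citation of \cite{huang2020boundary}.
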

\begin{proof} (1) \(\Rightarrow\) (2) follows from the discussion prior to the theorem. We leave the details to the readers. 

(2) \(\Rightarrow\) (1): Let \(\mathcal{X}\subset \mathbb{C}^{m+1}\) be the real hypersurface given by 
\[
\mathcal{X}:=\{\rho'(Z,\zeta,w):=(1- |\zeta|^2) \Im w - Z \Zba^t - \Re \left(\zeta Z Z^t \right) = 0,\ |\zeta| <1\},
\]
where $Z = (z_1,z_2,\dots, z_{m-1}) \in \mathbb{C}^{m-1}$ is a row vector and $\zeta, w \in \mathbb{C}$. The holomorphic map $\Psi \colon \mathbb{C}^{m+1} \to \mathbb{C}^{m+2}$ given by
\[
\Psi(Z ,\zeta, w)
=
\left( Z, \frac{1}{2}\left(w \zeta + i Z Z^t + i \zeta\right),  \frac{1}{2}\left(w \zeta + i Z Z^t - i \zeta\right) , w \right)
\]
transversal to \(\mathcal{X}\) and sends $\mathcal{X}$ into the indefinite real hyperquadric $ \mathbb{H}^{2m+3}_1$. In fact, if $ \mathbb{H}^{2m+3}_1$ is defined by
\[
\widetilde{\rho} (Y,\overline{Y}): = \Im y_{m+2} - \sum_{j=1}^{m} |y_j|^2 + |y_{m+1}|^2, \quad Y = (y_1, \dots , y_{m+2}) \subset \mathbb{C}^{m+2},
\]
then it holds that
\[
\widetilde{\rho} \circ \Psi = \rho'.
\]
Thus, the composition \(\Psi \circ H\) is transversal to \(\mathbb{H}^{2m+3}_1\) and sends the \(\mathbb{H}^{2n+1}\) into \(\mathbb{H}^{2m+3}_1\).

If \(\mathcal{A}(H) = 0\),  then \(\Psi \circ H\) has vanishing Hermitian part of the CR Ahlfors tensor (cf. Reiter--Son \cite{rs24}). We can apply the result of Huang--Lu--Tang--Xiao \cite{huang2020boundary} to conclude that \(\Psi \circ H\) extends as a local isometry. From this, we can prove that \(H\) also extends as a local isometry.
\end{proof}

\begin{remark}\rm Theorem~\ref{thm:2.3} can be easily generalized to the case of CR maps from a hyperquadric into the hypersurface
\[
\mathcal{X}_{l} = \left\{(1-|\zeta|^2)\Im(w) - Z E_l \Zba^t - \Re(\zeta Z Z^t) = 0, \ |\zeta|<1\right\},
\]
where \(E_l = \mathrm{diag}(1,\dots,1,\dots, -1,\dots, -1)\), with eigenvalue \(-1\) of multiplicity \(l\) . Similarly, Theorem~\ref{thm:22} can also be generalized to the case of CR maps from the boundary of a generalized ball into a generalized domain of type IV, defined by
\[ 
D_{m,l}^{\mathrm{IV}} = \left\{1 - 2 Z E_l \Zba^t + |Z Z^t|^2 > 0,\ |ZZ^t|<1\right\}.
\]
Details are left to the readers.
\end{remark}

\section{Proof of Theorem~\ref{main}}
\label{sec:4}
In this section, we prove our main theorem. The idea is similar to the proofs in Reiter--Son \cite{rs22} and Reiter \cite{Reiter16a}, however, since we have more components of the map as well as more variables, the required computations are much more challenging.

The proof consists of several steps involving the determination of the map under consideration along the first and second Segre variates of the Heisenberg hypersurface. Based on a partial normal form of the map, the first step is to determine the map along the first Segre set \(\Sigma = \{(z,w) \in \mathbb{C}^3 \mid w = 0\}\). By a ``reflection principle'' we obtain three holomorphic equations for the components of the map. In the second step, we determine the derivative \(H_{w}\) along the first Segre set, giving another holomorphic equation, which in turn completes a system of four holomorphic equations for four components of the map. In the final step, we solve the system and show that the solution is equivalent to one of the maps in the given list. Below we shall present the details.

Let \(U \subset \mathbb{H}^5\) be an open subset of the Heisenberg hypersurface and \(H\colon U \to \mathcal{X} \subset \mathbb{C}^4\) a \(C^2\)-smooth. By the regularity result mention in Remark \ref{rem:1}, \(H\) is smooth.
As both \(\mathbb{H}^5\) and \(\mathcal{X}\) are homogeneous, we can assume that \(0 \in U\) and \(H\) sends the origin into the origin: \(H(0) = 0\). By the normalization, \(H\) is equivalent to a (smooth or formal) map of the form \eqref{e:normalform}. We therefore assume that \(H\) is a formal map and already of this form.

Next, we determine \(H\) along the Segre set \(\Sigma\) of the Heisenberg hypersurface at the origin. Here,
\[ \Sigma = \{(z,0) \mid z \in \mathbb{C}^2\} \subset \mathbb{C}^3.\]
\begin{lemma}\label{lem:31}  On the first Segre set \(\Sigma\), it holds that 
\begin{equation}\label{e:lem31}
	f = \frac{2z}{1+\sqrt{1-4i\bar{\lambda}z z^t}},\quad g = 0.
\end{equation}
\end{lemma}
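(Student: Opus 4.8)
The plan is to extract from the mapping equation \eqref{e:me} its restriction to the first Segre set $\Sigma=\{w=0\}$, which corresponds to choosing the conjugate variables so that $\wba + 2iz\zba^t$ also lies in $\Sigma$; concretely, I would set $\wba=0$ and $\zba=0$ in all antiholomorphic slots except where we first differentiate, and read off the resulting holomorphic identities. More precisely, following the strategy sketched in the text (the ``reflection principle'' step), I would differentiate \eqref{e:me} once in each of the barred variables $\zba_1,\zba_2,\wba$ and then set $\zba=0$, $\wba=0$. Using the normal form \eqref{e:normalform} — so that $f(z,0)$, $\phi(z,0)$, $g(z,0)$ are the unknowns restricted to $\Sigma$, with known $1$-jets $f(z,0)=z+O(2)$, $\phi(z,0)=zA_{\alpha,\beta}z^t+O(3)$, $g(z,0)=O(3)$ — together with the low-order data computed in the lemma preceding Proposition~\ref{prop:2.1} (in particular $g(z,0)=0$ identically was already seen in the proof of the first lemma), one obtains a small closed system of ODE-like functional equations in $z$ for the restricted components.

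The key steps, in order: (1) Record that $g(z,0)=0$ on $\Sigma$; this follows by setting $\zba=\wba=0$ in \eqref{e:me} itself, since every term then carries a factor $\bar f$, $\bar\phi$, or $g(z,\wba+2iz\zba^t)-\gba = g(z,0)$, and matching holomorphic coefficients forces $g(z,0)\equiv 0$. (Alternatively it is already in the normalization lemma's proof.) (2) Differentiate \eqref{e:me} in $\zba_k$ and set $\zba=\wba=0$. The chain rule produces terms $\partial_{\zba_k}[\wba+2iz\zba^t]=2iz_k$ hitting the holomorphic arguments, and terms where $\partial_{\zba_k}$ hits an explicitly barred factor, producing $\bar f_{z_k}(0)=e_k$, $\bar\phi_{\zba_k}(0)$, $\bar\phi_{z_j}(0)$-type constants and $\gba_{\zba_k}(0)$-type constants, most of which vanish or are known from the normalization. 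After using $g(z,0)=0$ and the normal form, this collapses to an identity relating $f(z,0)$, its $z$-derivative contracted with $z$, and $\phi(z,0)$; the coefficient $\bar\lambda$ enters precisely through $\bar\phi_w(0)=\bar\lambda$. (3) Assemble the identities from $k=1,2$ and from the $\wba$-derivative into a determined system, and solve it. The expected outcome is a first-order relation of Euler type, something like $z\cdot f_z(z,0) = f(z,0)\big/\bigl(1 - \text{(quadratic in }z)\bigr)$ together with $\phi(z,0)$ expressed through $f$, whose solution with initial $1$-jet $z$ is exactly $f=2z/(1+\sqrt{1-4i\bar\lambda zz^t})$. One checks directly that this candidate satisfies the system: writing $f=c(zz^t)\,z$ with scalar function $c$, the equation becomes a single scalar ODE for $c$ in the variable $t=zz^t$, namely $2tc'+c = c/(1-i\bar\lambda t c^2)$ or similar, solved by $c=2/(1+\sqrt{1-4i\bar\lambda t})$; uniqueness follows from the prescribed value $c(0)=1$.

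The main obstacle I anticipate is bookkeeping in step (2): correctly tracking which mixed derivatives of $\bar f$, $\bar\phi$, $\gba$ survive at the origin after imposing the normal form, and in particular verifying that the term $\Re(\bar\zeta z z^t)$-origin piece $\tfrac12\{\bar\phi F + \phi\bar F\}$ contributes only through the quadratic part of $\phi$ and hence brings in $A_{\alpha,\beta}$ in a way that is consistent with (indeed subsumed by) the single parameter $\bar\lambda$ on $\Sigma$. A secondary subtlety is justifying that the ansatz $f(z,0)=c(zz^t)z$ is forced rather than assumed — this should come from the $U(2)$-equivariance already built into the normalization (the residual stability group acting on $\Sigma$ rotates $z$ while fixing $zz^t$), or alternatively by observing that the functional system itself propagates the radial structure from the $1$-jet. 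Once the radial reduction is in hand, the remaining computation is a routine scalar ODE and the verification that $g\equiv 0$ is consistent with all higher-order coefficients of \eqref{e:me} on $\Sigma$.
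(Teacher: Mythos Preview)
Your plan contains a genuine misconception about what the differentiation produces. Differentiating \eqref{e:me} in \(\zba_k\) and setting \(\zba=\wba=0\) does \emph{not} yield \(z\)-derivatives of \(f\): the chain rule through the argument \(\wba+2iz\zba^t\) produces \(2iz_k\) times the \emph{\(w\)-derivative} of each holomorphic factor at \(w=0\). So with bare \(\partial_{\zba_k}\) you would obtain relations involving the additional unknowns \(f_w(z,0)\), \(\phi_w(z,0)\), \(g_w(z,0)\), and the system would not close. There is no Euler-type ODE in \(z\), and no step at which ``\(z\cdot f_z(z,0)\)'' can arise.

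The paper sidesteps this by applying the CR tangent operators \(L_k=\partial_{\zba_k}-2iz_k\,\partial_{\wba}\), which \emph{annihilate} every holomorphic factor \(h(z,\wba+2iz\zba^t)\). Hence only the barred factors are differentiated, and after evaluation at \(\zba=\wba=0\) (using \(\bar f_{\zba_k}(0)=e_k\), \(\bar\phi_{\wba}(0)=\bar\lambda\), \(\gba_{\wba}(0)=1\), and \(g(z,0)=0\)) one obtains the purely \emph{algebraic} identities
\[
z_k - f_k(z,0) + i\bar\lambda\, z_k\, F(z,0)=0,\qquad k=1,2,\quad F=f_1^2+f_2^2.
\]
These immediately give \(z_1 f_2=z_2 f_1\), hence \(z_1^2 F = (zz^t) f_1^2\); substituting back yields the quadratic
\[
z_1^2 - z_1 f_1(z,0) + i\bar\lambda\,(zz^t)\, f_1(z,0)^2 = 0,
\]
whose holomorphic root at the origin is \(f_1=2z_1/(1+\sqrt{1-4i\bar\lambda zz^t})\). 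No ansatz \(f=c(zz^t)z\) is needed (it is a consequence of \(z_1f_2=z_2f_1\)), no ODE is solved, and the parameters \(\alpha,\beta\) from \(A_{\alpha,\beta}\) do not enter at this order --- your worry about their interaction with \(\bar\lambda\) is moot here, since \(\bar\phi_{\zba_k}(0)=0\) in the normal form. The identity \(g(z,0)=0\) is obtained exactly as you say, by setting \(\zba=\wba=0\) directly in \eqref{e:me}.
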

\begin{proof} Setting \(\zba = (0,0)\) and \(\wba=0\) in \eqref{e:me} and using the fact \(H\) sends the origin into the origin, we find that
\[ 
\bar{g}(\zba,0) = 0.
\]
Taking complex conjugation, we have \(g(z,0) = 0\), as desired.

Next, we introduce the following differential operators
\begin{equation}\label{vfs}
	L_1 = \frac{\partial}{\partial \zba_1} - 2i z_1 \frac{\partial}{\partial \wba}, \quad L_2 = \frac{\partial}{\partial \zba_2} - 2i z_2 \frac{\partial}{\partial \wba}.
\end{equation}
Applying \(L_1\) to the mapping equation \eqref{e:me} and evaluating along \((\zba,\wba) = (0,0,0)\), we obtain
\begin{equation}\label{e33}
	z_1 - f_1(z,0) + i\lba z_1 F(z,0) = 0,
\end{equation}
where, as above, \(F = ff^t = f_1^2 + f_2^2\). Similarly, we have 
\begin{equation}\label{e34}
	z_2 - f_2(z,0) + i\lba z_2 F(z,0) = 0.
\end{equation}
From \eqref{e33} and \eqref{e34}, we find that
\[ 
z_1 f_2(z,0) = z_2 f_1(z,0),
\]
which, in turns, gives
\[ 
z_1^2 F(z,0) = z z^tf_1^2(z,0) 
\]
From this equation and \eqref{e33}, we obtain a quadratic equation in \(f_1(z,0)\), namely,
\begin{equation}\label{e:35}
	z_1^2 - z_1 f_1(z,0) + i\lba z z^t f_1^2(z,0)=0.
\end{equation}
Solving \eqref{e:35} for holomorphic solution, we find that
\[ 
f_1(z,0) = \frac{2z_1}{1+\sqrt{1-4i\bar{\lambda}z z^t}},
\]
where \(\sqrt{1-4i\bar{\lambda}z z^t}\) is the branch of the root taking value 1 at \(z=(0,0)\). The formula for \(f_2(z,0)\) follows immediately.
\end{proof}
\begin{lemma}\label{lem:32} On the first Segre set \(\Sigma\), it holds that 
\[ g_{w} = 1 + i \bar{\lambda} f f^t = 1+ \frac{4i \lba z z^t}{1+\sqrt{1-4i\bar{\lambda}z z^t}}.
\]
\end{lemma}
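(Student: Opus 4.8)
The plan is to read off $g_w$ along $\Sigma$ directly from the mapping equation \eqref{e:me}: I would differentiate \eqref{e:me} once with respect to $\bar w$ and then restrict to $\bar z = 0$, $\bar w = 0$. Write $\chi := \bar w + 2iz\bar z^t$ for the argument at which the unbarred components $f,\phi,g,F$ appear in \eqref{e:me}; since $\partial_{\bar w}\chi = 1$, differentiating brings a factor $g_w(z,\chi)$ out of $g(z,\chi)$, and at $\bar z = 0$, $\bar w = 0$ one has $\chi = 0$, so this factor becomes $g_w(z,0)$, which is precisely the object to be computed. The virtue of evaluating at the origin in the barred variables is that there every barred factor is controlled by the normal form \eqref{e:normalform} and by Lemma~\ref{lem:31}.

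So first I would collect the jet data needed: from \eqref{e:normalform}, $g_w(0) = 1$, $\phi(0) = 0$, $\phi_w(0) = \lambda$, $f(0) = 0$ and $f_w(0) = 0$, hence $F(0) = F_w(0) = 0$ for $F = ff^t$; passing to conjugates, $\bar g_{\bar w}(0,0) = 1$, $\bar\phi(0,0) = 0$, $\bar\phi_{\bar w}(0,0) = \bar\lambda$, $\bar f(0,0) = 0$, $\bar f_{\bar w}(0,0) = 0$, $\bar F(0,0) = \bar F_{\bar w}(0,0) = 0$; and from Lemma~\ref{lem:31}, $g(z,0) = 0$ together with the explicit expression for $f(z,0)$. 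Next I would differentiate \eqref{e:me} in $\bar w$, set $\bar z = 0$ and $\bar w = 0$, and track the surviving terms. In $\bigl(1 - \phi(z,\chi)\bar\phi\bigr)\bigl(g(z,\chi) - \bar g\bigr)$ the second factor vanishes at the evaluation point (both $g(z,0)$ and $\bar g(0,0)$ are $0$), so only $\bigl(1-0\bigr)\bigl(g_w(z,0) - \bar g_{\bar w}(0,0)\bigr) = g_w(z,0) - 1$ remains. The $f\bar f^t$ term and its $\bar w$-derivative vanish completely because $\bar f(0,0) = 0$ and $\bar f_{\bar w}(0,0) = 0$. In the term $\bar\phi F + \phi\bar F$, every summand carrying a $\bar\phi$, $\bar F$, or $\bar F_{\bar w}$ dies at $(0,0)$, so only the contribution of $\bar\phi_{\bar w}(0,0) F(z,0) = \bar\lambda\, f(z,0) f(z,0)^t$ survives — in particular, the still-undetermined restriction of $\phi$ to $\Sigma$ never enters. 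Collecting, \eqref{e:me} reduces to $g_w(z,0) = 1 + i\bar\lambda\, f(z,0) f(z,0)^t$; substituting $f(z,0)$ from Lemma~\ref{lem:31} and simplifying (using that $1 - 4i\bar\lambda zz^t$ is the square of the chosen branch of $\sqrt{1 - 4i\bar\lambda zz^t}$) then yields the closed formula.

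I do not expect a genuine obstacle: the statement is a single differentiation of \eqref{e:me} combined with Lemma~\ref{lem:31}. The only step demanding care is the bookkeeping at $\bar z = 0$, $\bar w = 0$ — checking that, apart from $\bar g_{\bar w}(0,0) = 1$, $\bar\phi_{\bar w}(0,0) = \bar\lambda$, and the vanishing $g(z,0) = 0$, every barred factor and its first $\bar w$-derivative vanishes there, so that all remaining products collapse and no information about $\phi$ along $\Sigma$ is needed.
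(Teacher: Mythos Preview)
Your proposal is correct and follows essentially the same approach as the paper: differentiate the mapping equation once in \(\bar w\), evaluate at \((\bar z,\bar w)=(0,0)\), and use the normal form together with Lemma~\ref{lem:31}. The paper's only (cosmetic) difference is that it records the intermediate identity \(g_w = 1 + i\bar\lambda f f^t + \bar\lambda g\phi\) before invoking \(g(z,0)=0\) from Lemma~\ref{lem:31} to kill the last term, whereas you apply \(g(z,0)=0\) immediately so that the \(\bar\lambda g\phi\) contribution never appears.
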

\begin{proof} Differentiating the mapping equation with respect to \(\wba\) and evaluating along \((\zba,\wba) = (0,0,0)\), we obtain
\[ 
g_w = 1+i \bar{\lambda} f f^t + \lba g\phi.
\]
From this and the previous lemma, we complete the proof.
\end{proof}

We divide into two cases.

\subsection{Case 1:} \(\lambda = 0\) In this case, we shall prove that \(H\) is equivalent to one of three rational maps.

From Lemmas \ref{lem:31} and \ref{lem:32}, we have  
\[ 
f(z,0) = z, \quad g(z,0) = 0,\quad  g_w(z,0) = 1.
\]
\begin{lemma} If \(\lambda = 0\), then 
\begin{equation}\label{e:lem33}
    \sigma =  0, \quad \nu =\mu = (0,0).
\end{equation}
\end{lemma}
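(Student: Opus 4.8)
The plan is to exploit the mapping equation \eqref{e:me} together with the information already gathered about $H$ along the first Segre set $\Sigma$. In Case~1 we have $\lambda=0$, and Lemmas~\ref{lem:31}--\ref{lem:32} give $f(z,0)=z$, $g(z,0)=0$ and $g_w(z,0)=1$. The normal form \eqref{e:normalform} still carries the unknowns $\nu$, $\mu$, $\sigma$ (and, since $\lambda=0$, the quadratic part of $\phi$ is $zA_{\alpha,\beta}z^t$ with no linear term). The idea is that these unknowns appear in the Taylor coefficients of $H$ of order $2$ and $3$ in $w$, and that differentiating the mapping equation a small number of times in $z,\zba,\wba$ and evaluating at the origin produces a system of scalar equations forcing $\sigma$, $\nu_1$, $\nu_2$, $\mu_1$, $\mu_2$ to vanish.

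Concretely, I would first differentiate \eqref{e:me} with respect to $\wba$ twice (or apply $L_1, L_2$ once or twice combined with $\partial_{\wba}$), evaluate at $(\zba,\wba)=0$, and use $\lambda=0$, $g(z,0)=0$, $f(z,0)=z$ to simplify drastically: many terms of \eqref{e:me} contain a factor of $\phi$ or $\bar\phi$, and since the linear part of $\phi$ vanishes when $\lambda=0$, these contribute only at higher order. This should isolate, order by order in $z$, the coefficients $\sigma$ (coefficient of $w^2$ in $\phi$), then $\nu$ (coefficient of $w^2$ in $f$), and $\mu$ (coefficient of $wz^t$ in $\phi$). The cleanest route is probably: (i) reading off the $w^2$--coefficient of $g$ along $\Sigma$, which by Lemma~\ref{lem:32} differentiated once more in $w$ is already pinned down, then comparing with the normal form to constrain $\sigma$; (ii) applying $L_1$ (resp.\ $L_2$) together with $\partial_{\wba}$ to get a holomorphic identity on $\Sigma$ relating $f_w(z,0)$ to $z$, $\phi_w(z,0)$ and $\sigma$; and (iii) extracting the linear-in-$z$ and quadratic-in-$z$ parts of these identities to kill $\nu$ and $\mu$ respectively. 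The symmetry under the stability subgroup preserving the normal form (the residual parameters after Proposition~\ref{prop:2.1}) can be invoked if a coefficient is only determined up to a change of frame, but with $\lambda=0$ I expect everything to be pinned down outright.

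The main obstacle I anticipate is bookkeeping: \eqref{e:me} is a product of three factors with an extra symmetrized bracket involving $F=ff^t$, so each derivative spawns many terms, and one must track carefully which survive after setting $\lambda=0$ and using $g(z,0)=0$, $\phi(z,0)=zA_{\alpha,\beta}z^t$. A secondary subtlety is that the quadratic term $zA_{\alpha,\beta}z^t$ of $\phi$ is generally nonzero, so the bracket $\tfrac12\{\bar\phi F + \phi\bar F\}$ does contribute at the relevant order; one has to verify that its contribution is consistent with (and does not obstruct) the vanishing of $\nu,\mu,\sigma$, rather than simply assuming it drops out. I would organize the computation by total weight, assigning weight $1$ to $z$ and $\bar z$ and weight $2$ to $w$ and $\bar w$ (so that the Heisenberg relation $\Im w = z\zba^t$ is weight-homogeneous of weight $2$), and then read off the equations weight by weight; at weights $3$ and $4$ the unknowns $\nu,\mu,\sigma$ appear linearly with nonzero coefficients, which yields \eqref{e:lem33}.
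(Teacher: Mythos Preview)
Your high-level strategy---differentiate the mapping equation, evaluate at \((\zba,\wba)=(0,0)\), and read off coefficients---is exactly what the paper does, but the paper's execution is more direct than the multi-step plan you outline. The paper applies the \emph{mixed} CR tangent operator \(L_1L_2\) (not \(L_j\) combined with \(\partial_{\wba}\), nor \(\partial_{\wba}^2\)) to \eqref{e:me} and evaluates at \(\zba=\wba=0\). Since \(L_j\) annihilates the holomorphic factors, only the antiholomorphic derivatives of \(\bar f, \bar\phi, \bar g\) up to order two appear; these are read off directly from the normal form and involve precisely \(\bar\nu,\bar\mu,\bar\sigma\) (via \(\bar f_{\wba^2},\bar\phi_{\wba\zba_j},\bar\phi_{\wba^2}\)). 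After substituting \(f(z,0)=z\), the term \(\alpha(z_2f_1-z_1f_2)\) cancels identically and one is left with a single quartic polynomial in \(z_1,z_2\),
\[
4\bar\sigma(z_1^3z_2+z_1z_2^3)+i\bar\mu_2 z_1^3+i\bar\mu_1 z_2^3+(8\bar\nu_1+i\bar\mu_1)z_1^2z_2+(8\bar\nu_2+i\bar\mu_2)z_1z_2^2=0,
\]
whose monomial coefficients immediately give \(\sigma=\mu_1=\mu_2=\nu_1=\nu_2=0\).

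Your proposed route would also work in principle, but needs adjustment. In step (i) you confuse the roles: \(\sigma\) is the \(w^2\)-coefficient of \(\phi\), not of \(g\), and the \(w^2\)-coefficient of \(g\) is already zero by the normal form, so nothing new about \(\sigma\) comes from there. In steps (ii)--(iii) you propose \(L_j\partial_{\wba}\), but \(\nu\) lives in \(f_{ww}\), not \(f_w\), so one \(\partial_{\wba}\) is not enough to see it; you would need \(\partial_{\wba}^2\), and then the holomorphic factors are also differentiated twice, making the bookkeeping heavier. The point of using \(L_1L_2\) is that each \(L_j=\partial_{\zba_j}-2iz_j\partial_{\wba}\) already carries a \(\partial_{\wba}\), so \(L_1L_2\) contains a \(\partial_{\wba}^2\) term with coefficient \(-4z_1z_2\) hitting only the barred side, which is exactly what pulls in \(\bar\nu\) and \(\bar\sigma\) while keeping all holomorphic factors evaluated on \(\Sigma\). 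Finally, your worry about the bracket term \(\tfrac12\{\bar\phi F+\phi\bar F\}\) is unfounded for this computation: \(L_1L_2\bar F\big|_0=0\) (since \(\bar f(0)=0\) and \(L_1\bar f_k\big|_0,\,L_2\bar f_k\big|_0\) are the Kronecker deltas), so \(\phi(z,0)\) never enters, and only \(F(z,0)=zz^t\) is needed.
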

\begin{proof}
Applying \(L_1\) and \(L_2\) to the mapping equation \eqref{e:me} consecutively, then setting \(\zba_1=\zba_2 = \wba = 0\), we find that
\begin{align*}
	\alpha (z_2f_1 - z_1 f_2) + 8 z_1 z_2\, \bar{\nu} f^t + (i( \bar{\mu}_2z_1+\bar{\mu}_1z_2) + 4 \bar{\sigma} z_1z_2) F
	= 0
\end{align*}
holds along \(w=0\). Using Lemmas \eqref{lem:31} and \eqref{lem:32}, we obtain
\[ 
4 \bar{\sigma} (z_1^3z_2+z_1z_2^3) + i\bar{\mu}_2z_1^3+i\bar{\mu}_1z_2^3 + (8\bar{\nu}_1+i\bar{\mu}_1) z_1^2 z_2 + (8\bar{\nu}_2+i\bar{\mu}_2)z_1z_2^2 = 0.
\]
Equating the coefficients of the monomials on the left-hand side to zero, we can easily obtain \eqref{e:lem33}.
\end{proof}

Next, we determine \(\phi\) along \(\Sigma\).
\begin{lemma}   If \(\lambda = 0\), then
\begin{equation}\label{H1stSegre}
	H(z,0) = (z, zA_{\alpha,\beta} z^t, 0), \quad \text{where}\ A_{\alpha,\beta} = \begin{pmatrix} \alpha & \beta \\ \beta  & -\alpha 
	\end{pmatrix}.
\end{equation}
\end{lemma}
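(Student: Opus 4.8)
The plan is to determine $\phi(z,0)$ by the same ``reflection'' used for Lemmas~\ref{lem:31} and~\ref{lem:32}, now pushed to second order. Set $w':=\wba+2iz\zba^{t}$. The vector fields $L_1,L_2$ of \eqref{vfs} annihilate every holomorphic function of $(z,w')$, so $L_j f(z,w')=L_j\phi(z,w')=L_j g(z,w')=L_j F(z,w')=0$. Hence, applying a product of the $L_j$'s to the mapping equation \eqref{e:me} and afterwards setting $\zba=\wba=0$ (so $w'=0$), the Leibniz rule leaves the holomorphic factors $f,\phi,g,F$ undifferentiated, and the identity collapses into a polynomial relation among $f(z,0)=z$, $g(z,0)=0$, $F(z,0)=zz^{t}$ (Lemmas~\ref{lem:31}, \ref{lem:32}, using $\lambda=0$), the constants $\sigma=0$, $\mu=\nu=(0,0)$ from \eqref{e:lem33}, and the $2$-jets at the origin of $\bar f,\bar\phi,\bar g,\bar F$, which are read off from the normal form \eqref{e:normalform}.

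The point is to pick the right second-order operator. For the mixed operator $L_1L_2$ the coefficient of $\phi(z,0)$ is $\partial_{\zba_1}\partial_{\zba_2}\bar F|_0=0$, because the quadratic part of $F=ff^t$ is $zz^t$; this is precisely why the computation behind \eqref{e:lem33} gave no information on $\phi$. I therefore apply $L_1$ twice to \eqref{e:me}. Now $-\tfrac12\phi(z,w')\bar F(\zba,\wba)$ contributes $-\tfrac12\phi(z,0)\,\partial_{\zba_1}^2\bar F|_0=-\phi(z,0)$ after setting $\zba=\wba=0$, so $\phi(z,0)$ survives with an invertible coefficient. The other terms are handled the same way: in $(1-\phi\bar\phi)(g-\bar g)$ the only surviving contribution is $-(L_1^2\bar g)|_0=0$ (the quadratic part of $g$ being $w$), the remaining two carrying factors $\phi(z,w')g(z,w')$ and $\phi(z,w')L_1^2(\bar\phi\bar g)$ that vanish at $\zba=\wba=0$ because $g(z,0)=0$ and $\bar\phi\bar g$ vanishes to order $3$; the Hermitian term $-f(z,w')\bar f^{t}(\zba,\wba)$ contributes $-z\cdot(L_1^2\bar f^{t})|_0$, computed from the quadratic part $\tfrac i2 w(zA_{\alpha,\beta})$ of $f$; and $-\tfrac12\bar\phi(\zba,\wba)F(z,w')$ contributes $-\tfrac12\,zz^{t}(L_1^2\bar\phi)|_0$, computed from the quadratic part $zA_{\alpha,\beta}z^t$ of $\phi$.

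Collecting the four contributions, the identity obtained by applying $L_1$ twice to \eqref{e:me} and evaluating at $\zba=\wba=0$ reduces to $\phi(z,0)=q(z)$ with $q$ a homogeneous quadratic polynomial in $z$; a short computation gives $q(z)=\alpha(z_1^2-z_2^2)+2\beta z_1 z_2=zA_{\alpha,\beta}z^{t}$. (One may also avoid computing $q$: its degree-two part must coincide with that of $\phi(z,0)$, which equals $zA_{\alpha,\beta}z^t$ by \eqref{e:normalform} and $\lambda=\sigma=0$, $\mu=(0,0)$; hence $q=zA_{\alpha,\beta}z^t$, and the same identity forces $\phi(z,0)$ to have no terms of degree $\ge 3$.) Thus $\phi(z,0)=zA_{\alpha,\beta}z^{t}$, which together with $f(z,0)=z$ and $g(z,0)=0$ is \eqref{H1stSegre}; applying $L_2$ twice yields the same conclusion. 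The only difficulty is bookkeeping --- expanding $L_1^2$ of the product in \eqref{e:me} by Leibniz and reading off the relevant second derivatives of $\bar f,\bar\phi,\bar g,\bar F$ at the origin --- with no conceptual obstacle, all steps being legitimate in $\CC[[z,w,\zba,\wba]]$ (equivalently, for the smooth map, after the regularity reduction recalled in Remark~\ref{rem:1}).
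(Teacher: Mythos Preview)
Your proof is correct and follows exactly the paper's approach: apply $L_1^2$ to the mapping equation \eqref{e:me}, evaluate at $\zba=\wba=0$, and solve for $\phi(z,0)$. Your write-up is considerably more detailed than the paper's one-line sketch (including the useful remark on why $L_1L_2$ fails to see $\phi(z,0)$ and the shortcut via the known $2$-jet), but the method is identical.
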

\begin{proof} It remains to prove that 
\[ 
\phi(z,0) = \alpha(z_1^2 - z_2^2) + 2\beta z_1z_2 = z A_{\alpha,\beta} z^t.
\]
To this end, we apply \(L_1^2\) to the mapping equation and set \((\zba,\wba)= (0,0,0)\). Then, solving the resulting equation for \(\phi(z,0)\) to get the desired formula.
\end{proof}
From Lemma \ref{H1stSegre}, we obtain several holomorphic equations for components of \(H\).
\begin{lemma}\label{lem35} If \(\lambda = 0\), then the following holds in a neighborhood of the origin of \(\mathbb{C}^3\).
\begin{align}
	\alpha w^2 (g\phi+iff^t) +4 w z_2 f_2 - iw^2 \phi + 4z_2^2 g & = 0, \label{h1}\\
	\alpha w^2 (g\phi+iff^t) - 4 w z_1 f_1 + i w^2 \phi + 4z_1^2 g & = 0, \label{h2}\\
	w(\alpha z_1 + \beta z_2) (g\phi+iff^t) - 2 z_2 (z_2 f_1 - z_1 f_2) - i w z_1 \phi & = 0,\label{h3}\\
	w(\beta z_1 - \alpha z_2) (g\phi+iff^t) + 2 z_1 (z_2 f_1 - z_1 f_2) - i w z_2 \phi & = 0,\label{h4}\\
	(zA_{\alpha,\beta}z^t) (g\phi + i f f^t) - i z z^t \phi & = 0. \label{h5}
\end{align}
\end{lemma}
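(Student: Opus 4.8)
The plan is to substitute the boundary values just obtained in \eqref{H1stSegre} back into the mapping equation \eqref{e:me} along the slice $\wba=0$. After this specialization the first argument of the unbarred components of $H$ becomes $w:=2iz\zba^t$, while the barred components are evaluated at $(\zba,0)$; since $A_{\alpha,\beta}$ is real and symmetric, \eqref{H1stSegre} and its conjugate give $\fba(\zba,0)=\zba$, $\gba(\zba,0)=0$, and $\pba(\zba,0)=\zba A_{\alpha,\beta}\zba^t$. Inserting these, \eqref{e:me} collapses to the single relation
\[
\bigl(1-(\zba A_{\alpha,\beta}\zba^t)\,\phi\bigr)\,g\;-\;f\,\zba^t\;-\;\tfrac12\bigl((\zba A_{\alpha,\beta}\zba^t)\,ff^t+(\zba\zba^t)\,\phi\bigr)\;=\;0,
\]
valid for all $(z,\zba)$ near the origin, where from now on $f=f(z,w)$, $\phi=\phi(z,w)$, $g=g(z,w)$, $ff^t$ are evaluated at $w=2iz\zba^t$, and $f\zba^t=f_1\zba_1+f_2\zba_2$.

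The second step converts this identity in $(z,\zba)$ into honest holomorphic identities in $(z,w)$. The key point is that, once $w=2iz\zba^t$ has been introduced, the ``intrinsic'' quantities $f$, $\phi$, $g$, $ff^t$ no longer depend on $\zba$; hence, along any complex line in $\zba$-space on which $z\zba^t$ stays constant, the left-hand side above is a polynomial of degree at most two in the line parameter, whose coefficients are polynomials in $z$, $w$ and in the components of $f(z,w)$, $\phi(z,w)$, $g(z,w)$. Substituting $\zba=(w/(2iz_1),\,0)$ and multiplying by $4z_1^2$ gives \eqref{h2}; substituting $\zba=(0,\,w/(2iz_2))$ and multiplying by $4z_2^2$ gives \eqref{h1}; and parametrizing the generic line $z\zba^t=w/(2i)$ and equating its two leading coefficients gives \eqref{h5} together with a further relation, from which \eqref{h3} and \eqref{h4} are recovered by elementary linear algebra (one checks $z_1\,\eqref{h3}+z_2\,\eqref{h4}=w\cdot\eqref{h5}$ and that the other combination $z_2\,\eqref{h3}-z_1\,\eqref{h4}$ is exactly the remaining line coefficient, so that dividing a $2\times2$ system by $zz^t$ yields \eqref{h3}, \eqref{h4}). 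All five identities extend holomorphically across $\{z_1=0\}$, $\{z_2=0\}$, $\{zz^t=0\}$ by continuity.

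The difficulty is purely computational rather than conceptual: one must carry out the substitutions and the final elimination cleanly, keeping track of the denominators $z_1$, $z_2$, $zz^t$ they introduce and checking the divisibilities needed for the cleared relations to be genuinely holomorphic near $0\in\CC^3$. One should also note that this step uses only the values of the barred map on $\{\wba=0\}$, i.e.\ only \eqref{H1stSegre} and not the finer normalization \eqref{e:normalform}; in particular, no information on $g_w$ beyond what \eqref{H1stSegre} already contains is needed here, and the derivatives of $g$ along $\Sigma$ computed in Lemma~\ref{lem:32} will enter only in later steps.
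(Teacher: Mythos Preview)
Your approach is essentially the same as the paper's: both set $\wba=0$ in \eqref{e:me}, insert the boundary values \eqref{H1stSegre}, and then eliminate $\zba$ by restricting to the affine line $2iz\zba^t=w$ and reading off the coefficients of the resulting degree-two polynomial in the line parameter. The only organizational difference is that the paper parametrizes the line by $\zba_1=-(iw+2z_2\zba_2)/(2z_1)$ and extracts \eqref{h1} and \eqref{h3} directly as the $\zba_2^0$- and $\zba_2^1$-coefficients (then swaps $z_1\leftrightarrow z_2$ for \eqref{h2}, \eqref{h4}), whereas you first isolate \eqref{h1}, \eqref{h2}, \eqref{h5} and then recover \eqref{h3}, \eqref{h4} by inverting a $2\times 2$ system; this is the same computation packaged slightly differently.
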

Notice that the equations above are holomorphic, as all functions appearing on the left hand sides contain no complex conjugate variables. In the smooth or formal case, these equations hold as equalities in the ring of formal power series \(\mathbb{C}[[z,w]]\).
\begin{proof} Put \(\Psi = g\phi+iff^t\). Setting \(\wba = 0\) in the mapping equation \eqref{e:me} yields
\begin{equation}\label{he1}
	i\, \overline{zA_{\alpha,\beta}z^t}\,\Psi(z,2iz\zba^t) - \overline{zz^t} \phi(z,2izz^t) - 2 \bar{z} f^t = 0.
\end{equation}
Substituting \(\zba_1 = -(iw + 2 z_2\zba_2)/(2z_1)\) into \eqref{he1} and clearing the denominator, we obtain an identity of the form
\begin{equation}\label{e:g}
	G(H(z,w),z,w,\zba_2) \equiv 0,
\end{equation}
which holds outside the varieties \(z_1 = 0\). Here, \(G(U,z,w,t)\) is polynomial in its arguments \(U,z,w,t\). The left-and side is viewed as a holomorphic function or formal power series of \(z_1,z_2, w, \zba_2\). Setting \(\zba_2 = 0\) in \eqref{e:g}, we obtain the first equality \eqref{h1}. Next, differentiating \eqref{e:g} with respect to \(\zba_2\) and setting \(\zba_2=0\), we obtain the third identity \eqref{h3}. Other identities can be obtain in the same manner and by exchanging the role of \(z_1\) and \(z_2\). We leave the details to the readers.
\end{proof}

Equations \eqref{h1}--\eqref{h5} are not independent; the last two equations can be obtained from the first three. Solving these equations simutanously, we obtain
\begin{lemma} Put \(\Psi = g\phi+iff^t\). Then
	\begin{equation}\label{e:s1}
		f = \frac{g}{w} z + \left(\frac{w\Psi}{2zz^t}\right) zA_{\alpha,\beta}, \quad \phi = -\frac{izA_{\alpha,\beta} z^t}{zz^t} \Psi .
	\end{equation}
\end{lemma}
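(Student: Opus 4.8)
The plan is to treat the five holomorphic identities \eqref{h1}--\eqref{h5} of Lemma~\ref{lem35} as a linear system in the unknown entries $f_1,f_2,\phi$, with coefficients in the ring $\mathbb{C}[[z,w]][g,\Psi]$, where $\Psi = g\phi + iff^t$ is regarded for the moment as an auxiliary symbol. Since we observed that \eqref{h4} and \eqref{h5} follow from \eqref{h1}--\eqref{h3}, it suffices to work with three of the equations; the natural choice is to use \eqref{h3}, \eqref{h4} (or \eqref{h1}, \eqref{h2}) to solve for $f_1$ and $f_2$, and then one of the remaining equations to solve for $\phi$.

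Concretely, I would first form the combinations of \eqref{h1} and \eqref{h2} that isolate $\phi$ versus $f$: adding \eqref{h1} and \eqref{h2} kills the $\pm iw^2\phi$ terms and yields, after dividing by $w$ (legitimate in $\mathbb{C}[[z,w]]$ once one checks the bracket is divisible by $w$, which it is because each summand already carries a factor of $w$),
\[
2\alpha w\,\Psi + 4(z_2 f_2 - z_1 f_1) + 4\frac{z_2^2 - z_1^2}{w}\,g = 0,
\]
while \eqref{h5} gives directly
\[
\phi = -\frac{i\,zA_{\alpha,\beta}z^t}{zz^t}\,\Psi.
\]
For $f$ itself, I would use \eqref{h3} and \eqref{h4} as a $2\times 2$ linear system in $(z_2 f_1 - z_1 f_2)$ and $\phi$, or, more efficiently, exploit that $f$ along $\Sigma$ equals $z$ (Lemma~\ref{lem:31} with $\lambda=0$) together with $g(z,0)=0$: writing $f = (g/w)z + h$ forces $h$ to vanish on $w=0$, and plugging this ansatz into \eqref{h1}--\eqref{h3} should pin down $h$ as a multiple of $zA_{\alpha,\beta}$. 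Matching the coefficient of $zA_{\alpha,\beta}$ using \eqref{h3} (after substituting $\phi$ from \eqref{h5} and simplifying $z_2 f_1 - z_1 f_2 = (z_2 h_1 - z_1 h_2)$) yields the scalar factor $w\Psi/(2zz^t)$, giving exactly \eqref{e:s1}.

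The main obstacle I anticipate is bookkeeping rather than conceptual: one must be careful that all the divisions performed (by $w$, by $z_1$, by $zz^t$) are valid in the formal power series ring, i.e.\ that the relevant numerators really are divisible by these factors. Divisibility by $w$ is transparent from the explicit form of \eqref{h1}--\eqref{h4}; divisibility by $zz^t$ in \eqref{e:s1} is the delicate point, since $\phi = -i(zA_{\alpha,\beta}z^t/zz^t)\Psi$ must be a genuine power series. This follows because $\Psi = g\phi + iff^t$ vanishes to high order and, more to the point, because the full derivation of \eqref{h5} guarantees $zz^t \mid (zA_{\alpha,\beta}z^t)\Psi$ as an identity of formal series; alternatively one verifies directly that the combination appearing is regular by re-deriving it from the mapping equation before clearing denominators. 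Once this is settled, the rest is linear algebra: solve, substitute, and read off \eqref{e:s1}. I would also double-check consistency by substituting the resulting $f$ and $\phi$ back into \eqref{h1} to confirm no extra constraint is imposed at this stage (the remaining constraints, determining $g$ and the scalars $\alpha,\beta,\sigma$ and the final classification, are to be extracted in the subsequent steps of the proof).
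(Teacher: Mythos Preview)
Your approach is correct and is essentially the same as the paper's: both solve the linear system \eqref{h1}--\eqref{h5} for $f_1,f_2,\phi$ in terms of $g$ and $\Psi$. The paper streamlines the execution by working directly over the quotient field of $\mathbb{C}[[z,w]]$: it writes \eqref{h1}, \eqref{h2}, \eqref{h5} as a $3\times 5$ homogeneous system in $(f_1,f_2,\phi,g,\Psi)$, computes the reduced row-echelon form of the coefficient matrix, and reads off \eqref{e:s1} immediately, thereby sidestepping all the divisibility bookkeeping you anticipate.
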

\begin{proof}
We can rewrite the equations in Lemma~\ref{lem35} as a system of linear equations in 5 variables \(\Psi, f_1, f_2, \phi, g\). It turns out that the rank of the coefficient matrix over the quotient field of \(\mathbb{C}[[z,w]]\) is 3. Three equations \eqref{h1}, \eqref{h2}, and \eqref{h5} form the following system 
\begin{equation}\label{e:17}
	\left(
\begin{array}{ccccc}
	0 & 4 w z_2 & -i w^2 & -4 z_2^2 & \alpha  w^2 \\
	-4 w z_1 & 0 & i w^2 & 4 z_1^2 & \alpha  w^2 \\
	0 & 0 & -i zz^t & 0 & zA_{\alpha,\beta} z^t\\
\end{array}
\right)
\begin{pmatrix}
	f_1 \\ f_2 \\ \phi \\ g \\ \Psi 
\end{pmatrix}
= 
\begin{pmatrix}
	0 \\ 0\\ 0\\ 0 \\ 0
\end{pmatrix}.
\end{equation}
The reduced row-echelon form of the coefficient matrix can be computed easily, namely,
\begin{equation}\label{rref}
	\left(
	\begin{array}{ccccc}
		1 & 0 & 0 & -{z_1}/{w} & - w (\alpha  z_1+\beta  z_2)/(2zz^t) \\
		0 & 1 & 0 & -{z_2}/{w} & w (\alpha  z_2-\beta  z_1)/(2zz^t) \\
		0 & 0 & 1 & 0 & {i zA_{\alpha,\beta} z^t}/({zz^t}) \\
	\end{array}
	\right)
\end{equation}
From this, we can easily solve the system to obtain \eqref{e:s1}.
\end{proof}

To determine \(\phi\) and \(g\), we need another holomorphic equation. We prove the following lemma.
\begin{lemma} If \(\lambda = 0\), then
\begin{equation}\label{key}
	H_w(z,0) = \left(\frac{i}{2}zA_{\alpha,\beta} ,\ 0, \ 1\right).
\end{equation}
\end{lemma}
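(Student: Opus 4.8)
The plan is to extract information about the $w$-derivatives of the components of $H$ along $\Sigma$ by differentiating the mapping equation \eqref{e:me} in $w$ (not only in $\zba$), and then to combine this with what is already known on $\Sigma$ from Lemmas~\ref{lem:31}, \ref{lem:32} and the preceding lemmas in Case 1, namely $f(z,0)=z$, $g(z,0)=0$, $g_w(z,0)=1$, $\phi(z,0)=zA_{\alpha,\beta}z^t$, and $\sigma=0$, $\nu=\mu=(0,0)$. First I would observe that since $g(z,0)=0$ and $g_w(z,0)=1$ are already established, the third component of \eqref{key} is done; so the content is the claims $f_w(z,0) = \frac{i}{2}zA_{\alpha,\beta}$ and $\phi_w(z,0)=0$. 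The natural route to $f_w(z,0)$ is to apply $L_1$ and $L_2$ (the operators from \eqref{vfs}) to the mapping equation \emph{together with} one $\partial_w$, then restrict to $(\zba,\wba)=0$. Note that on $\Sigma$ the substitution $w \mapsto \wba + 2iz\zba^t$ becomes $w\mapsto 2iz\zba^t$, so a $\partial_w$ applied to $H$ in the first slot of \eqref{e:me} survives the restriction and produces exactly the $f_w(z,0)$, $g_w(z,0)$, $\phi_w(z,0)$ terms we want, while a $\partial_{\wba}$ hits the antiholomorphic factors.

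Concretely I would differentiate \eqref{e:me} by $L_j$ and by $\partial_{\wba}$, set $(\zba,\wba)=0$, and read off a holomorphic identity in $z$ involving $f_w(z,0)$, $\phi_w(z,0)$ and the already-known $f(z,0)=z$, $\phi(z,0)=zA_{\alpha,\beta}z^t$, $g_w(z,0)=1$. The antiholomorphic second-order data entering here — the coefficients $\nu,\mu,\sigma,\lambda$ — have all been pinned down ($\lambda=0$, $\nu=\mu=0$, $\sigma=0$), so the resulting equations should close up. I expect two such equations (one from $L_1$, one from $L_2$), and possibly an auxiliary one from applying $\partial_{\wba}$ alone or $\partial_{\wba}^2$, to isolate $\phi_w(z,0)=0$; then $f_w(z,0)$ is forced from the $L_j\partial_{\wba}$ identities. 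An alternative, perhaps cleaner, route is to use the partial normal form \eqref{e:normalform} directly: since $f = z + \frac{i}{2}w(zA_{\alpha,\beta}) + w^2\nu + O(3)$ and $\nu=0$, one gets $f_w(0)=0$ at the origin, but to get $f_w(z,0)$ for \emph{all} $z$ we still need the Segre-set argument above — the normal form only controls low-order jets. So the differentiation-of-\eqref{e:me} approach is the substantive one.

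The main obstacle I anticipate is bookkeeping: \eqref{e:me} has the product structure $(1-\phi\bar\phi)(g-\bar g) - f\bar f^t - \frac12(\bar\phi F + \phi \bar F)$, so applying $L_j\partial_{\wba}$ and restricting generates many terms, and one must carefully track which survive at $(\zba,\wba)=0$ — using repeatedly that $g$, $f-z$-type quantities and $\phi$ vanish to the appropriate order on $\Sigma$, and that $\bar\phi^{(\cdot)}(0)$ evaluates via the already-normalized coefficients. The danger is sign errors and dropping a term; the safeguard is that the answer is predicted by the normal form ($f_w = \frac{i}{2}zA_{\alpha,\beta}$ at leading order, $\phi_w=0$), so any computation can be checked for consistency at lowest order in $z$. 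Once \eqref{key} is in hand, it supplies the fourth holomorphic equation referred to in the outline, which together with \eqref{e:s1} will determine $\phi$ and $g$ and finish Case~1.
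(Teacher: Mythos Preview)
Your proposal is correct and follows essentially the same route as the paper: apply $L_j$ together with $T=\partial_{\wba}$ to the mapping equation \eqref{e:me}, restrict to $(\zba,\wba)=(0,0)$, and use the already-established Segre-set data ($f(z,0)=z$, $g(z,0)=0$, $g_w(z,0)=1$, $\phi(z,0)=zA_{\alpha,\beta}z^t$, together with $\lambda=\sigma=0$, $\nu=\mu=0$) to read off $f_{jw}(z,0)$.

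One small difference worth noting: you leave the determination of $\phi_w(z,0)$ somewhat open, suggesting an auxiliary $\partial_{\wba}$ or $\partial_{\wba}^2$ identity from \eqref{e:me}. The paper instead takes a shortcut that avoids returning to the mapping equation: once $f_w(z,0)=\tfrac{i}{2}zA_{\alpha,\beta}$ and the other Segre-set values are in hand, it simply differentiates the \emph{holomorphic} identity \eqref{h5}, namely $(zA_{\alpha,\beta}z^t)(g\phi+iff^t)-izz^t\,\phi=0$, with respect to $w$ and sets $w=0$. Substituting the known values immediately yields $-izz^t\,\phi_w(z,0)=0$, hence $\phi_w(z,0)=0$. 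This is cleaner than extracting $\phi_w$ from a further differentiation of \eqref{e:me}, though your proposed route would also work.
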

\begin{proof}
Consider the differential operator \(T = \partial/\partial \wba\). Applying \(L_1\) and \(T\) to the mapping equation consecutively and evaluating along \((\zba,\wba) = (0,0,0)\), we obtain
\[ 
f_{1w}(z,0) = \frac{i}{2} \left(\alpha z_1 + \beta z_2\right).
\]
Similarly, with \(L_1\) is replaced by \(L_2\), we obtain
\[ 
f_{2w}(z,0) = \frac{i}{2} \left(\beta z_1 - \alpha z_2\right).
\]
Differentiating \eqref{h5} with respect to \(w\), setting \(w=0\), and substituting, we obtain
\[ 
-iz z^t \phi_w(z,0) = 0.
\]
Finally, that \(g_w(z,0) = 1\) follows from Lemma~\ref{lem:32}. The proof is complete.
\end{proof}
\begin{lemma} If \(\lambda = 0\), then
\begin{equation}\label{e:s2}
		\alpha w(g\phi + i f f^t) - z_1(2+i\alpha w)f_1 - i\beta z_1 w f_2 + i w \phi + 2 z_1^2 = 0.
	\end{equation}
\end{lemma}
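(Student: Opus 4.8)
The plan is to derive \eqref{e:s2} by the same reflection-principle technique used in Lemma~\ref{lem35}, but now applying the operator $T = \partial/\partial\wba$ once before restricting to the first Segre set, so that the information carried by $H_w(z,0)$ in \eqref{key} enters the identity. Concretely, I would start from \eqref{he1}, the version of the mapping equation obtained by setting $\wba = 0$, together with the substitution $\zba_1 = -(iw + 2z_2\zba_2)/(2z_1)$ that produced the polynomial identity \eqref{e:g}. Before making that substitution, however, I would return to the full mapping equation \eqref{e:me}, apply $L_1$ and then $T$, and evaluate along $\wba = 0$; this yields a relation among $f(z,0)$, $\phi(z,0)$, $g(z,0)$ and their $w$-derivatives at $w=0$, which by \eqref{H1stSegre} and \eqref{key} are all known explicitly. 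The point is that this gives a \emph{new} holomorphic equation in $z,w$ — not reducible to \eqref{h1}--\eqref{h5} — because those five equations only encoded the vanishing of $G(H(z,w),z,w,\zba_2)$ and its first $\zba_2$-derivative at $\zba_2 = 0$, i.e.\ the $\wba=0$ slice, whereas \eqref{e:s2} additionally uses the first $\wba$-derivative.

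In practice the cleanest route is probably the following. Differentiate the mapping equation \eqref{e:me} with respect to $\wba$, set $\wba = 0$, and apply $L_1$; after clearing the denominator coming from $z_1$ (exactly as in the proof of Lemma~\ref{lem35}) one obtains an identity $\widetilde G(H(z,w),H_w(z,0),z,w,\zba_2) \equiv 0$ valid off $\{z_1 = 0\}$, polynomial in its arguments. Setting $\zba_2 = 0$ and then substituting the known values $f(z,0) = z$, $\phi(z,0) = zA_{\alpha,\beta}z^t$, $g(z,0) = 0$, $g_w(z,0) = 1$, $f_w(z,0) = \tfrac{i}{2}zA_{\alpha,\beta}$, $\phi_w(z,0) = 0$ from \eqref{H1stSegre} and \eqref{key} should collapse the expression to precisely \eqref{e:s2}. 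One should double-check the overall normalization of the $z_1$-clearing (the factor of $z_1$ versus $z_1^2$) so that the coefficients $-z_1(2 + i\alpha w)$, $-i\beta z_1 w$, $iw$, $2z_1^2$ come out as stated; a quick sanity check is to restrict \eqref{e:s2} to $w = 0$, where it must reduce to the already-established $f_1(z,0) = z_1$.

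The main obstacle here is not conceptual but bookkeeping: one has to keep careful track of which terms in $\partial_{\wba}\eqref{e:me}$ survive at $\wba = 0$, since the factor $(1 - \phi\bar\phi)$, the $\bar g$ term, and the quadratic terms $\bar\phi F + \phi\bar F$ all contribute, and several of these involve the chain-rule substitution $\wba \mapsto \wba + 2iz\zba^t$ which interacts with $L_1$. The cross-terms between the $L_1$-derivative and the $T$-derivative are where sign errors creep in. A secondary subtlety is ensuring that \eqref{e:s2}, once combined with \eqref{e:s1}, is genuinely independent — i.e.\ that it will let us pin down $g$ (and hence $\phi$) rather than being a consequence of the rank-$3$ system \eqref{e:17}; this is guaranteed precisely because $H_w(z,0)$ is new data, but it is worth remarking on. I expect the verification to be a few lines of symbolic computation once the derivative identity is written out correctly.
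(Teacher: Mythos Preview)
Your plan has a genuine gap: applying $T=\partial_{\wba}$ to the mapping equation does \emph{not} produce an identity of the form $\widetilde G(H(z,w),H_w(z,0),z,w,\zba_2)=0$ as you assert. The operator $T$ is transversal to the Segre family, so it acts nontrivially on every holomorphic factor: $\partial_{\wba}\bigl(h(z,\wba+2iz\zba^t)\bigr)=h_w(z,\wba+2iz\zba^t)$. After you set $\wba=0$ and make the substitution $\zba_1=-(iw+2z_2\zba_2)/(2z_1)$, these become $H_w(z,w)$ at the \emph{generic} point $(z,w)$, not $H_w(z,0)$. Hence the resulting identity involves the new unknown functions $f_{1,w}(z,w),f_{2,w}(z,w),\phi_w(z,w),g_w(z,w)$, which are not determined by \eqref{H1stSegre} or \eqref{key}, and you cannot ``substitute the known values $f_w(z,0)=\tfrac{i}{2}zA_{\alpha,\beta}$, $\phi_w(z,0)=0$, $g_w(z,0)=1$'' to collapse it to \eqref{e:s2}. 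If in addition you apply $L_1$ (in either order), you also pick up the second derivatives $\bar H_{\wba\wba}(\zba,0)$ on the antiholomorphic side, which are likewise unknown.

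The paper's route is shorter than you expect: one applies \emph{only} $L_1$ to \eqref{e:me}, then sets $\wba=0$, $\zba_2=0$, $\zba_1=-iw/(2z_1)$. Because $L_1=\partial_{\zba_1}-2iz_1\partial_{\wba}$ is tangent to the Segre varieties, it annihilates each holomorphic factor $h(z,\wba+2iz\zba^t)$, so after the substitution the identity contains only $H(z,w)$ on the holomorphic side. On the antiholomorphic side one needs $\bar H_{\zba_1}(\zba,0)$ and $\bar H_{\wba}(\zba,0)$, which are precisely the conjugates of the data supplied by \eqref{H1stSegre} and \eqref{key}. In other words, the information from \eqref{key} enters not through a separate $T$-differentiation but through the $-2iz_1\partial_{\wba}$ part of $L_1$ itself; this is exactly why \eqref{e:s2} is independent of \eqref{h1}--\eqref{h5} (which came from \eqref{e:me} with no differentiation) while still being an equation purely in $f_1,f_2,\phi,g$.
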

\begin{proof} Substituting \(\zba_1 = -iw/(2z_1)\) and \(\zba_2 = 0\) into the following equation, 
\[ 
L_1 (\rho(z,\bar{w}+2i z\zba^t), \overline{H(z,w)}) = 0,
\]
we obtain the desired formula. Details are left to the readers. 
\end{proof}

We continue the proof of the main result. Substituting \eqref{e:s1} into \eqref{e:s2}, clearing the denominator \(2w z z^t\), we obtain an equation of \(g\) and \(\Psi\) of the form
\begin{equation}\label{e:321}
	 2i z_1 (2i z_1 - w(\alpha z_1+\beta z_2)) g+z_1 w^2(2\alpha z_1 + 2 \beta z_2 - iwz_1 (\alpha^2+\beta^2))\Psi + 4wz_1^2 (z z^t) = 0.
\end{equation}
Substituting \eqref{e:s1} into the equation \(\Psi - (g\phi + i f f^t) = 0\) and clearing the common denominator \(-4i w^2 z z^t \), we obtain another equation of \(g\) and \(\phi\). Namely, we obtain
\begin{equation}\label{e:322}
	4(zz^t)^2g^2 + w^4(\alpha^2+\beta^2) \Psi^2 + 4i w^2 (z z^t) \Psi = 0.
\end{equation}
From \eqref{e:321} and \eqref{e:322}, we can uniquely solve for \((g,\Psi)\), which are holomorphic at the origin. The result is
\begin{equation}\label{e:g2}
	g(z,w) = \frac{4w}{4 - (\alpha^2+\beta^2)w^2},
	\quad
	\Psi(z,w) = \frac{4i z z^t}{4 - (\alpha^2+\beta^2)w^2}.
\end{equation}
Plugging these into \eqref{e:s1}, we obtain
\begin{equation}\label{e:phi}
\phi(z,w) = \frac{4 zA_{\alpha,\beta}z^t}{4 - (\alpha^2+\beta^2)w^2},\quad f(z,w) = \frac{2(2z + i wzA_{\alpha,\beta})}{4 - (\alpha^2+\beta^2)w^2},
\end{equation}
where
\begin{align*}
A_{\alpha,\beta} = \begin{pmatrix}
	\alpha & \beta \\ \beta & -\alpha
\end{pmatrix}.
\end{align*}
From this point on, we divide into two subcases:
\subsubsection*{Subcase 1: \((\alpha,\beta) = (0,0)\)}
It is easy to see that the map takes the form \((z,0,w)\), i.e., \(H\) is the linear map.

\subsubsection*{Subcase 2: \((\alpha,\beta)\ne (0,0)\)} In this case, \(r := \sqrt{\alpha^2 + \beta^2} > 0\). By composing with two suitable automorphisms of the source and target we rescale the matrix \(A\) to see that \(H\) is equivalent to a map of the form
\begin{equation}\label{e:rationalmap}
r(z,w) = \left(\frac{z(I+ i w A)}{1 - w^2},\ \frac{ 2zAz^t}{1 - w^2},\ \frac{w}{1 - w^2} \right),
\end{equation}
where \(I\) is the \(2\times 2\) identity matrix and 
\[A = A_{1,0} = 
\begin{pmatrix}
	1 & 0\\ 0 & - 1
\end{pmatrix}.
\]
Explicitly, let \(s\in [0,2\pi)\) be given by
\[ 
	\alpha = r \cos(s), \quad \beta = r\sin(s),
 \]
and consider the following automorphisms \(\gamma \in \Aut_0(\mathcal{X})\) and \(\psi \in \Aut_0(\mathbb{H}^5)\) given by

\begin{align*}
\gamma(z,\zeta,w) & = \left(\sqrt{r} z B,\, \zeta,\, rw\right), \\
\psi(z,w) & = \left(\sqrt{r}z B, r w\right),
\end{align*}
where \(B\) is the following \(2\times 2\) orthogonal matrix
\[ 
B =  \begin{pmatrix}
	\cos(s/2) & -\sin(s/2) \\ \sin(s/2) & \cos(s/2)
\end{pmatrix}.
\]
Then, by direct calculation, one can check that
\[
r = \gamma \circ H \circ \psi^{-1},
\]
which completes the proof of Case 1.
\subsection{Case 2:} \(\lambda \ne 0\). In this case, we will show that the map is irrational and an isometric embedding of the ``canonical'' K\"ahler--Einstein metrics from the Siegel domain into a one-sided neighborhood of \(\mathcal{X}\).
\begin{lemma}
If \(\lambda \ne 0\), then \(\alpha = \beta = 0\).
\end{lemma}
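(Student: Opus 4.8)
The plan is to push the Segre-variety computation of Lemmas~\ref{lem:31}--\ref{lem:32} one order further and to read off from it an obstruction that can vanish only when \(A_{\alpha,\beta}=0\). The key point is that for \(\lambda\neq 0\) the restriction \(f(z,0)=\dfrac{2z}{1+\sqrt{1-4i\lba\,zz^t}}\) is genuinely branched, so that
\[
f(z,0)=z+i\lba\,(zz^t)\,z+O(5),\qquad F(z,0)=f f^t=zz^t+2i\lba\,(zz^t)^2+O(6)
\]
carry a nonzero cubic and quartic tail proportional to \(\lba\), and it is exactly this tail that is incompatible with a nonzero quadratic coefficient \(A_{\alpha,\beta}\) of \(\widetilde\phi\).

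Concretely, I would apply the operators \(L_1^2\) and \(L_2^2\) to the mapping equation \eqref{e:me}, evaluate at \(\zba=\wba=0\), and use that \(L_1,L_2\) kill the holomorphic arguments together with the already established identity \(g(z,0)=0\). As in the case \(\lambda=0\), this yields two identities in \(\mathbb{C}[[z]]\) of the shape
\[
\phi(z,0)\,\bigl(1+8\lba\, z_j^2\bigr)=R_j(z),\qquad j=1,2,
\]
where each \(R_j\) is an explicit polynomial combination of \(f_1(z,0),f_2(z,0),F(z,0)\) with coefficients linear in \(\alpha,\beta\) and in the remaining second-order jet data \(\nu,\mu,\sigma\) of the normal form \eqref{e:normalform}. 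Since \(\phi(\cdot,0)\) is an honest formal power series, the two expressions \(R_1/(1+8\lba z_1^2)\) and \(R_2/(1+8\lba z_2^2)\) must coincide, i.e.\ \((1+8\lba z_2^2)R_1=(1+8\lba z_1^2)R_2\). Substituting the known series for \(f(z,0)\) and \(F(z,0)\) and comparing homogeneous parts, the degree-three part forces \(\nu=\mu=0\) (this does not use \(\lambda\neq 0\) and matches the corresponding step in Case~1); feeding this back in, the degree-four part collapses — after inserting \(f^{[3]}=i\lba(zz^t)z\), \(F^{[4]}=2i\lba(zz^t)^2\) and using \((z_1^2-z_2^2)^2+(2z_1z_2)^2=(zz^t)^2\) — to a single homogeneous quartic identity whose coefficients are scalar multiples of \(\lba\alpha\), \(\lba\beta\), and \(\sigma\). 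Comparing the \((z_1z_2)^2\)-coefficient gives \((-16+4i)\lba\alpha=0\) and the \(z_1^3z_2\)-coefficient gives \(16\lba\beta=0\); since \(\lba\neq 0\), we conclude \(\alpha=\beta=0\) (and, incidentally, \(\sigma=0\)).

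The only real difficulty I anticipate is organizational: differentiating \eqref{e:me} twice in the antiholomorphic directions and restricting to \(\Sigma\) produces many terms, and one must carefully isolate the genuinely holomorphic content, keep the chosen branch of \(\sqrt{1-4i\lba zz^t}\) under control so that every intermediate object is seen to lie in \(\mathbb{C}[[z]]\), and avoid recounting the jet parameters \(\nu,\mu,\sigma\). Once the two identities are correctly assembled the elimination is short, and it is precisely the hypothesis \(\lambda\neq 0\) that makes the degree-four obstruction non-degenerate, forcing \(A_{\alpha,\beta}=0\). In place of re-deriving \(R_2\) one could instead couple \(L_1\) with \(T=\partial/\partial\wba\) to obtain a second relation, but using \(L_1^2\) and \(L_2^2\) keeps the \(z_1\leftrightarrow z_2\) symmetry visible and is the cleanest route.
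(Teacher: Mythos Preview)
Your strategy is sound and, once the bookkeeping is done carefully, does yield \(\alpha=\beta=0\). The paper takes a closely related but cleaner route. Instead of pairing \(L_1^2\) with \(L_2^2\), it pairs \(L_1^2\) with the mixed operator \(L_1L_2\); eliminating \(\phi(z,0)\) between those two identities and substituting the explicit formula \(f(z,0)=2z/(1+\sqrt{1-4i\lba\,zz^t})\) produces a single relation of the shape
\[
M(z)\,\sqrt{1-4i\lba\,zz^t}+N(z)=0
\]
with \(M,N\) polynomials. For \(\lambda\neq0\) the square root is not a rational function of \(z\), so one concludes \(M\equiv N\equiv 0\) directly, without ever expanding the root as a power series. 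The leading monomials of \(M\) are \(4\lba\beta\,z_1^4-8\lba\alpha\,z_1^3z_2+\cdots\), and equating these to zero gives \(\alpha=\beta=0\) immediately, bypassing any preliminary determination of \(\nu,\mu,\sigma\). Your route via \(L_1^2,L_2^2\) and degree-by-degree comparison is equivalent in content and has the virtue of keeping the \(z_1\leftrightarrow z_2\) symmetry manifest; the paper's use of algebraic independence of \(\sqrt{\,\cdot\,}\) over \(\mathbb{C}(z)\) is what makes its version short.

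Two minor corrections in your write-up. First, the coefficient of \(\phi(z,0)\) arising from \(L_j^2\) is \((1-4i\lba\,z_j^2)\), not \((1+8\lba\,z_j^2)\); this propagates through your degree-4 comparison, and the actual coefficients one finds are \(-12i\lba\alpha\) for \(z_1^2z_2^2\) and \(8i\lba\beta\) for \(z_1^3z_2\) (still nonzero multiples of \(\lba\alpha\) and \(\lba\beta\), so the conclusion survives). Second, your degree-3 step does indeed force \(\nu=\mu=0\) independently of \(\lambda\), as you claim, but note that the paper's argument never needs this intermediate fact.
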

\begin{proof} Applying \(L_1^2\) the mapping equation \eqref{e:me} and evaluating along \(\zba_1 = \zba_2 = \wba = 0\), we obtain 
\begin{equation}\label{e:lem391}
	2z_1(\alpha + 4\bar{\nu}_1 z_1) f_1 + 2z_2(\beta + 4 \bar{\nu}_2 z_1) f_2 + (4 \bar{\sigma} z_1^2 + 2i \bar{\mu}_1 z_1 -\alpha) f f^t -(1-4i\bar{\lambda} z_1^2) \phi = 0,
\end{equation}
which holds along \(\Sigma\). Similarly, applying \(L_2\) and \(L_1\) consecutively to the mapping equation \eqref{e:me} and evaluating along \(\zba_1 = \zba_2 = \wba = 0\), we obtain 
\begin{multline}\label{e:lem392}
	(\beta z_1 + \alpha z_2 + 8 \bar{\nu}_1 z_1 z_2) f_1 + (-\alpha z_1 + \beta z_2 + 8 z_1 z_2 \bar{\nu}_2)f_2 \\ 
	+ (-\beta + i\bar{\mu}_1z_2 + i \bar{\mu}_2 z_1 + 4 \bar{\sigma} z_1 z_2) ff^t + 4i\bar{\lambda} z_1 z_2 \phi = 0
\end{multline}
along \(\Sigma\). Solving for \(\phi(z,0)\) from \eqref{e:lem391} and substituting the result into \eqref{e:lem392} and further substituting \eqref{e:lem31}, we obtain an equation of the form
\begin{equation}\label{e:324}
	M(z)\sqrt{1 - 4i \bar{\lambda} z z^t } + N(z) = 0,
\end{equation}
where \(M(z)\) and \(N(z)\) are explicit polynomials in \(z_1\) and \(z_2\). Since \(\lambda \ne 0\), equation \eqref{e:324} implies that \(M(z) = N(z) = 0\). The explicit formula of \(M(z)\), which is a quartic polynomial, is as follows:
\[ M(z) = 4 \bar{\lambda} \beta z_1^4 - 8 \bar{\lambda} \alpha z_1^3 z_2 + \cdots\]
where the dots represent monomials of different types, than the ones displayed. Equating the coefficients of \(z_1^4\) and \(z_1^3z_2\) to zero, we conclude that \(\alpha = \beta =0.\)
\end{proof}

We now complete the proof of Case 2. At an arbitrary point in \(p\in U\), a partial normal form of \(H\) at \(p\) must have coefficient \(\lambda\ne 0\), as otherwise, \(H\) were equivalent to a map appearing in Case 1. Therefore, \(H\) has vanishing geometric rank at an arbitrary point \(p\in U\) and hence on an open set. Thus $H$ extends to a local isometry by Theorem~\ref{thm:2.3}. By Xiao--Yuan \cite{xiao2020holomorphic}, it must be equivalent to \(\iota\). This completes Case 2. The case, when the map is nowhere transversal is treated in the next section, and it finishes the proof of Theorem~\ref{main}.

\subsection{Nowhere transversal maps} If the map $H$ is nowhere transversal in $\mathbb{H}^5$ then we have the following: 
\begin{lemma}
\label{lem:nowhere}
Any smooth CR map $H$, which sends $\mathbb{H}^5$ into $\mathcal{X}$ and is nowhere transversal in $\mathbb{H}^5$, is equivalent to a map of the form $(z,w) \mapsto (0, 0, \phi(z,w),0)$ for a smooth CR function $\phi$ fixing the origin.
\end{lemma}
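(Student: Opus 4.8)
The plan is to analyze the mapping equation \eqref{e:me} under the hypothesis that $H=(f,\phi,g)$ is nowhere CR transversal along $\mathbb{H}^5$. The first step is to translate non-transversality into a vanishing statement: transversality of $H$ at a point $p$ is equivalent to $g$ having nonvanishing normal derivative, i.e. in the formal/power-series picture at $p$ the coefficient $g_w(p)\ne 0$; since $H$ is nowhere transversal, we get $\operatorname{Re} g = O(\rho^2)$ type behaviour, and more usefully, that the CR function $g|_{\mathbb{H}^5}$ is real-valued and hence (being CR) locally constant, so $g\equiv 0$ along $\mathbb{H}^5$ after the normalization $H(0)=0$. Feeding $g\equiv 0$ back into \eqref{e:me} collapses the mapping equation to
\[
-\phi(z,\wba+2iz\zba^t)\,\bar\phi(\zba,\wba)\,\bar g(\zba,\wba) - f(z,\wba+2iz\zba^t)\bar f^t(\zba,\wba) - \tfrac12\bigl\{\bar\phi F + \phi \bar F\bigr\} = 0,
\]
and since $g$ vanishes on $\mathbb{H}^5$ the first term drops on $\mathbb{H}^5$; one then shows that on the Heisenberg hypersurface the remaining identity forces $f\equiv 0$ there as well.

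The key computation is the following: restricting the reduced mapping equation to $\mathbb{H}^5$ and using that $\mathcal{X}$ is strictly pseudoconvex transverse to the $\zeta$-direction, the quadratic-in-$f$ part $f\bar f^t + \tfrac12(\bar\phi F + \phi\bar F)$ must vanish identically as a power series on the Segre family. I would differentiate this identity along the Segre sets — exactly as in Lemma~\ref{lem:31}, using the vector fields $L_1, L_2$ from \eqref{vfs} — to extract, first, $f(z,0)=0$ on the first Segre set, and then inductively $f\equiv 0$ in a neighborhood; the point is that the $\phi F$ terms are of higher order in $f$, so the lowest-order part of the identity is $f\bar f^t=0$ on $\mathbb{H}^5$, which gives $f=0$ to first order, and a standard bootstrap on the Segre sets propagates this to all orders. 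With $f\equiv 0$ and $g\equiv 0$ the mapping equation is satisfied identically for any CR $\phi$ with $\phi(0)=0$, so $H=(0,0,\phi,0)$. Finally, to reach the stated normal form one applies an element of $\Aut_0(\mathcal{X})$: the only thing left to check is that $(0,0,\phi,0)$ genuinely lands in $\mathcal{X}$ — which is clear since with $z=0$, $w$ with $\Im w=0$ (i.e. on $\mathbb{H}^5$ where $z=0$) the defining equation $(1-|\zeta|^2)\Im w - z\bar z^t - \operatorname{Re}(\bar\zeta z z^t)=0$ reads $\Im w=0$, automatically satisfied — and that no automorphism is actually needed beyond the initial normalization, so the representative is $(z,w)\mapsto(0,0,\phi(z,w),0)$.

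The main obstacle I anticipate is the rigorous justification that $g\equiv 0$ on $\mathbb{H}^5$ from nowhere-transversality: one must rule out the possibility that $g$ is transversal-looking at the formal level but degenerate, i.e. carefully relate the analytic notion "CR transversal at $p$" (non-tangency of $dH$ to the target) to the algebraic condition on the jet of $g$, uniformly in $p$. The cleanest route is to invoke that along a connected piece of $\mathbb{H}^5$ either $H$ is transversal on a dense open set or it is nowhere transversal, and in the latter case $g$ restricted to $\mathbb{H}^5$ is a CR function whose image avoids a transverse direction, forcing it into the complex tangential part; combined with the reality constraint coming from $\rho'\circ H \geq 0$ near $\mathbb{H}^5$, this pins $g$ to zero. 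Once that is in hand, the rest is the Segre-set bootstrap, which is routine given the machinery already developed in Section~\ref{sec:4}.
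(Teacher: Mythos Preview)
Your overall architecture matches the paper's: normalize so that $H(0)=0$, show $g\equiv 0$, then show $f\equiv 0$. But your mechanism for the first step has a genuine gap.

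You argue that nowhere-transversality forces $g|_{\mathbb{H}^5}$ to be real-valued, hence (being CR on a strictly pseudoconvex hypersurface) constant. This is circular. On $\mathbb{H}^5$ the mapping equation reads $(1-|\phi|^2)\operatorname{Im} g = |f|^2 + \operatorname{Re}(\bar\phi\, ff^t)$, so for $|\phi|<1$ reality of $g$ is \emph{equivalent} to $f\equiv 0$; you cannot get it before knowing $f=0$. Your justifications ``$\operatorname{Re} g = O(\rho^2)$'' and ``image avoids a transverse direction'' do not translate into $\operatorname{Im} g=0$ and are not what non-transversality says.

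What the paper actually uses is stronger and different: it asserts that for a nowhere-transversal map from the (minimal) Heisenberg hypersurface, the identity $\rho'\circ H = 0$ holds \emph{identically} as a formal power series in the independent variables $z,w,\bar z,\bar w$, not merely modulo the ideal $(\rho)$. Equivalently, writing $\rho'\circ H = Q\,\rho$, nowhere-transversality gives $Q\equiv 0$ formally. Once that is in hand, setting $\bar z=\bar w=0$ in the full identity yields $g(z,w)\equiv 0$ (not just $g(z,0)=0$, which is all the Segre-set argument would give). Then the remaining identity $f\bar f^t + \operatorname{Re}(ff^t\,\bar\phi)=0$ holds for all $z,w,\bar z,\bar w$, and the paper kills $f$ by a weighted-homogeneous induction: assuming $f_0=\dots=f_n=0$, at total weight $2(n+1)$ the cubic $\operatorname{Re}(\cdot)$ terms drop (they would require $\phi_0\ne 0$) and one is left with $f^1_{n+1}\bar f^1_{n+1}+f^2_{n+1}\bar f^2_{n+1}=0$, hence $f_{n+1}=0$.

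Your Segre-set bootstrap for $f$ is a reasonable alternative to this induction, but only \emph{after} you have the identical vanishing of $\rho'\circ H$; without it, evaluating on the first Segre set gives no information (both sides of the reduced equation vanish trivially there). So the fix is not ``$g$ is real-valued'' but rather the passage from pointwise non-transversality to $Q\equiv 0$ in $\mathbb{C}[[z,w,\bar z,\bar w]]$, which is exactly the obstacle you anticipated but did not resolve.
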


\begin{proof}
The transitivity of the automorphisms of the source and target allows us to assume $H(0)=0$. Then the map has to satisfy the following equation:
\begin{align*}
(g(z,w) - \bar{g}(\bar{z}, \bar{w})) (1 - |\phi (z,w)|^2) - 2 i (f(z,w) \bar{f}^t (\bar{z}, \bar{w}) - \mathrm{Re}(f(z,w) f^t(z,w) \bar{\phi}(\bar{z}, \bar{w})) ) = 0,
\end{align*}
for all $z,w,\bar z, \bar w$. Setting $\bar z = \bar w = 0$ shows $g = 0$. We consider the weighted homogeneous expansion $f_j = \sum_{k\geq 0} f^j_k$, where $j=1,2$, and $\phi = \sum_{i\geq 0} \phi_i$, where $f_m^j$ and $\phi_m$  are weighted homogeneous polynomials of order $m$ with respect to the weight $1$ for $z_1, z_2$ and $2$ for $w$. Then we obtain when we collect terms of order $m$:
\begin{align*}
\sum_{k=0}^m f^1_k \bar{f}^1_{m-k} + \sum_{j=0}^m f^2_j \bar{f}^2_{m-j} + \mathrm{Re}\left( \sum_{i=0}^m \sum_{l=0}^i f^1_{m-i} f^1_{i-l} \bar{\phi}_l + \sum_{r=0}^m \sum_{s=0}^r f^2_{m-r} f^2_{r-s} \bar{\phi}_s  \right) =0.
\end{align*}
We show inductively that $f_n = 0$. $f_0=0$ follows from the fact that $H(0)=0$. Assuming $f_0 = \ldots = f_n = 0$ and consider $m = 2 (n+1)$ in the above equation. Then the equation becomes $f^1_{n+1} \bar f^1_{n+1 } + f^2_{n+1} \bar f^2_{n+1 } = 0$, which shows the claim.
\end{proof}

\section{Construction of maps, proof of 			Corollary~\ref{cor:balld4}, and an example}\label{sec:5}

From the well-known fact that the smooth part of the boundary of a type IV domain is locally CR equivalent to the tube over the future light cone and Theorem~\ref{main}, we can easily deduce Corollary~\ref{cor:balld4} about a characterization of proper holomorphic maps from the unit ball in $\CC^3$ into the type IV domain in \(\mathbb{C}^4\).

Before providing the proof of the corollary, we explain how to construct the maps stated in Corollary~\ref{cor:balld4} from the maps in Theorem~\ref{main}. Of course, the two isometric embeddings were known earlier from the work of Xiao--Yuan \cite{xiao2020holomorphic}. The third map was given in Reiter--Son \cite{rs22}. 

First, the rational map $\Phi$ given by
\begin{equation}\label{eq:phi}
\Phi(z,\zeta,w)
=
\left(\frac{2i z}{2i + w}, \ \frac{2i  - w - 2i \zeta -(w\zeta + izz^t)}{2(2i+w)},\ \frac{i\left(2i - w + 2i \zeta +(w\zeta + izz^t) \right)}{2(2i+w)}\right)
\end{equation}
sends a neighborhood $V$ of the origin biholomorphically onto some neighborhood $U$ of $p$ with $\Phi(0,0,0,0) = p$ and  $\Phi(\mathcal{X})  \subset \partial D^{\mathrm{IV}}_4$ (cf. \cite{rs22}). Composing it with the linear map, we obtain
\begin{equation}\label{eq:phiell}
\Phi \circ \ell (z,w)
=
\left(\frac{2i z}{2i + w}, \ \frac{2i  - w -izz^t}{2(2i+w)},\ \frac{i\left(2i - w + izz^t\right)}{2(2i+w)}\right).
\end{equation}
Next, consider the following modified Cayley transform:
\[ 
C_1(z,w) = \left(\frac{\sqrt{2}z}{1+w},\ \frac{2i(1-w)}{1+w}\right), 
\]
one can check that
\[ 
R_0 = \Phi \circ \ell \circ C_1
\]
is the well-known rational isometry. This map has a singular point at \((0,0,-1)\).

For the other rational map, we take
\[ 
A_{-1/2,0}= \begin{pmatrix}
-1/2 & 0 \\ 0 & 1/2
\end{pmatrix}
\]
and consider the map \(H_{A_{-1/2,0}}\). The (modified) Cayley transform 
\[ 
C_2(z_1,z_2,w) = \left(\frac{2z_1}{1-w},\ \frac{-2z_2}{1-w},\ \frac{4i(1+w)}{1-w}\right)
\]
sends the 3-ball into the Siegel domain and sends the sphere into the Heisenberg hypersurface. By direct calculations
\begin{equation}
\Phi \circ H_{A_{-1/2,0}} \circ C_2(z,w) = \left(z_1,z_2w, \dfrac{w^2-z_2^2}{2}, \dfrac{i(w^2+z_2^2)}{2}\right).
\end{equation}
The formulas of the automorphisms and maps to obtain the irrational map are quite complicated. Details are left to the readers.

Now we give the proof of the characterization of the proper holomorphic maps from \(\mathbb{B}^3\) into \(D^{\mathrm{IV}}_4\) that extend smoothly to a boundary point. The argument here is very similar to Reiter--Son \cite{rs22}.

\begin{proof}[Proof of Corollary~\ref{cor:balld4}]
Assume that $H\colon \mathbb{B}^3 \to \tfour$ is a proper holomorphic map that extends smoothly to a boundary point $p\in \mathbb{S}^5$. We can assume that $p=(0,0,1)$ and $H(p) = \left(0,0,\frac{1}{2},\frac{i}{2}\right)$. Then $\tilde{H}: = \Phi^{-1} \circ H \circ \mathcal{C}^{-1}$ defines a germ at the origin of CR maps sending $\mathbb{H}^5$ into $\lc$. Therefore, either it is equivalent to one of the transversal maps \(\ell, r,\) and \(\iota\) (by Theorem~\ref{main}), or it is nowhere transversal and of the form \((z,w) \mapsto (0,0,\phi(z,w),0)\) (by Corollary~\ref{lem:nowhere}). The last case cannot happen, as otherwise \(H\) is not proper. Hence, $\tilde{H}$ must belong to one of three equivalence classes of the germs of $\Phi^{-1} \circ F\circ \mathcal{C}^{-1}$ with $F\in \{R_0, P, I\}$. In fact, these three germs of CR maps $\Phi^{-1} \circ F\circ \mathcal{C}^{-1}$, $F\in \{R_0, P, I\}$ are pairwise inequivalent as can be easily checked using the Ahlfors invariant and the rationality. Thus, there are local CR automorphisms $\psi\in \Aut (\lc, 0)$ and $\gamma \in \Aut (\mathbb{H}^5, 0)$ such that $\psi \circ \tilde H \circ \gamma^{-1} = \Phi^{-1} \circ F\circ \mathcal{C}^{-1}$ near the origin for some $F\in \{R_0, P,  I\}$. Thus, if $\tilde \psi : = \Phi \circ \psi \circ \Phi^{-1}$ and $\tilde \gamma := \mathcal{C}^{-1} \circ \gamma \circ \mathcal{C}$, then, as germs at $p$, we have $\tilde \psi \circ H \circ \tilde \gamma^{-1} =F  \in \{R_0, P, I\}$. But $\tilde \psi$ is a global automorphism of $\tfour$ by a Alexander-type theorem of Mok--Ng \cite{mok2012germs} (see also Reiter--Son \cite[Theorem~2.3]{rs22} for a simpler proof in the special case of \(D^{\mathrm{IV}}_3\)) and $\tilde{\gamma}$ is a global automorphism of the unit ball. This completes the proof of Corollary~\ref{cor:balld4}.
\end{proof}

We end this paper by the following example of a family of maps.
\begin{example}
For \(s\in \mathbb{R}\), the family of proper holomorphic maps 
\begin{equation}
	P_B(z,w) = \left(z+(w-1)zB,\ \frac{1}{2}(w^2-zBz^t),\ \frac{1}{2i}(w^2+zBz^t)\right),
\end{equation}
where \(B = v^t v \in \mathrm{Mat}(2\times 2; \mathbb{R})\) and \(v = (\cos(s),\, \sin(s))\). For each $s$ this map sends \(\mathbb{B}^3\) into \(D^{\mathrm{IV}}_4\) properly. By Corollary~\ref{cor:balld4}, it is equivalent to \(P(z,w)\).
\end{example}

\end{document}